\newtheorem{definition}{Definition}[section]
\newtheorem{proposition}{Proposition}[section]
\newtheorem{lemma}{Lemma}[section]
\newtheorem{remark}{Remark}[section]
\numberwithin{equation}{section}
\newcommand{\ep}{\varepsilon}
\newcommand{\disp}{\displaystyle}
\newcommand{\ccal}{{\mathcal C}}
\newcommand{\ecal}{{\mathcal E}}
\newcommand{\Ical}{{\mathcal I}}
\newcommand{\jcal}{{\mathcal J}}
\newif\ifcomment \commentfalse
\def\commentON{\commenttrue}
\long\outer\def\BC#1\\EC{\ifcomment \sloppy \par \# \ldots\dotfill
{\em #1} \dotfill \# \par \fi } \commentON
\newcommand{\remove}[1]{}
\newcommand{\R}{\mathbb{R}}
\renewcommand{\d}{\mathop{{\textsc{d}}}}
\newcommand{\s}{\mathop{\textsc{s}}}
\renewcommand{\c}{\mathop{\textsc{c}}}
\begin{document} 
\title{Rare Mutations in Evolutionary Dynamics} 
\author[A.L.~Amadori, A.~Calzolari,  R.~Natalini, B.~Torti]{Anna Lisa Amadori$^{1}$, Antonella Calzolari$^{2}$, \\ Roberto Natalini$^{3}$, Barbara Torti$^{2}$}
\address{$^1$Dipartimento di Scienze Applicate, Universit\`a di Napoli \lq\lq Parthenope''}
\address{$^2$ Dipartimento di Matematica, Universit\`a di Roma \lq\lq Tor Vergata''}
\address{$^3$ Istituto per le Applicazioni del Calcolo \lq\lq M. Picone'', Consiglio Nazionale delle Ricerche}

\maketitle

\section*{Abstract} In this paper we study the effect of rare mutations, driven by a marked point process, on the evolutionary behavior of a population. We derive a Kolmogorov equation describing the expected values of the different frequencies and  prove some rigorous analytical results about their behavior. Finally, in a simple case of two different quasispecies, we are able to prove that  the rarity of mutations increases the survival opportunity of the low fitness species. 

\medskip

\subparagraph*{\textup{2000} Mathematics Subject Classification:} 92D25 (35R09, 60G55)

\medskip

\subparagraph*{Keywords:} Coevolutionary dynamics, mutations, marked point processes, partial integro-differential equations.

\section{Introduction}
Evolutionary dynamics describes biological systems in terms of three general principles: replication, selection and mutation.
Each biological type -- a genome or a phenotype as well as a species -- is described by its reproduction rate, or fitness. In force of selection, a population evolves and changes its fitness landscape. Genetic changes can help in reaching some local optimum, or open a path to a new fitness peak, but sometimes they may drift population away from a peak, especially if the mutation rate is high. See  \cite{Nowakbook} for an extensive account of the state of the art concerning evolutionary dynamics.

A mixed population of constant size constituted by a fixed number of different types is characterized by the vector collecting the relative abundance of each type: 
$x=(x_0, x_1,\dots,x_{\d})$. By virtue of evolution, this  vector draws a path in the  simplex 
\[ S=\left\{ x=(x_0,x_1\dots x_{\d})\in\R^{\d+1} \, : \, \sum\limits_{k=0}^{\d} x_k= 1, \,  x_k\ge 0 \text{ as } k=0,1\dots\d\right\}.\]
The mechanism of  replication/selection is well described by an ordinary differential equation, where the relative fitness measures the balance  between death and birth of individuals. Denoting by $f_k$ the absolute fitness of any  $k$-type, by $f=(f_0, x_1,\dots,f_{\d})$ the  fitness vector, and by $\disp\bar{f}=  x\cdot f$ the mean fitness of the population, this equation reads
\begin{equation}\label{replicator}
\frac{d x_k }{dt}= \left(f_k - \bar{f}\right) x_k  , \qquad \qquad \text{ as } k=0,1,\dots \d.
\end{equation}
Several shapes have been proposed for the absolute fitness. When one is modelling phenotypes, the choice of a constant fitness seems fair, but, starting with the seminal work by  \cite{MSP73},  an important amount  of research  deals with ideas arising from mathematical game theory, see \cite{SHbook} and references therein. In that framework, the individual fitness is taken as a linear function of the population $x$, i.e.~$f(x) = A x$,  where $\disp A$  is the payoff matrix that rules  the interplay between different strategists. 
In this case, equation  \eqref{replicator} is the celebrated replicator equation introduced by \cite{TJ1978}.

Concerning mutations,  it has to be mentioned the quasispecies equation introduced by \cite{ESbook79}, where constant fitness were considered. This choice modifies equation \eqref{replicator} into 
\begin{equation}\label{quasispecies}
\frac{d x_k}{dt}  = \sum\limits_{i=0}^{\d}  f_i  \, q_{ik}\, x_i -   \bar{f} x_k , \qquad \qquad \text{ as } k=0,1,\dots \d.
\end{equation}
Here the coefficient $q_{i k}$ express the proportion of offspring of $k$-type from a progenitor $i$, which shows up at any  procreation. It is clear that $q_{i k}=\delta_{i k}$ gives back the equation \eqref{replicator}.
When the fitness vector is given by the relation $f(x) = A x$, as suggested by evolutionary game theory, then equation \eqref{quasispecies} is the well-known replicator-mutator equation, also known as selection-mutation equation, studied in \cite{SS92mutation}. 

As a matter of fact, mutations introduce a random ingredient into evolution, that is not enough emphasized in \eqref{quasispecies}. 
\cite{TCH06} pointed out that equation \eqref{quasispecies} can be recovered by assuming that the population follows a generalized Moran process and taking the limit for large population size. \cite{CFM08} and \cite{JMW12levyflights} showed that  various macroscopic diffusion models can be derived by the same individual stochastic process, by performing different types of rescaling. We also mention \cite{DL96}, where a macroscopic dynamics is deduced by  an individual based  stochastic description of the mutation process.

Here, we prefer to take a more macroscopic viewpoint, which however takes strongly into account the different regime of mutation processes. We start by modelling the stochastic dynamics at the level of the frequency vector $x$.  In order to capture the ``rarity'' of mutations, we assume that they are driven by some point  processes.
As a result, the random path of the frequency vector $x$ in the simplex is not continuous. This happens because mutations arise at a different time-scale with respect to replication and selection, and this assumption constitutes the main novelty of the present paper.

The stochastic  dynamics for frequencies is introduced and studied  in Section \ref{sec2}.
In Section \ref{sec3}, a Kolmogorov equation describing the expected values is rigorously derived and studied in its analytical aspects. Because of the point process, such equation is of integro-differential type. Global existence of mild solutions to this equation is proved as well as some useful basic estimates. 
Next, in Section \ref{sec4}, we deal with the particular case of two different quasispecies. We prove the existence of a stable equilibrium, that in general is greater or equal to the one of the standard quasispecies equation, to point out the fact that the rarity of mutations increases the survival opportunity of the low fitness species. In this sense, the single relevant contribution of our paper is to show that  the equilibrium position depends not only on the global amounts of mutations, but also on the time intensity of the process driving mutations. As the time intensity goes to infinity, the standard quasispecies equilibrium is recovered. But when mutation are concentrated in few, very rare events, a different equilibrium arises. We prove this occurrence in a rigorous way in Section \ref{sec4}, using some refined uniform estimates of the derivatives of the solution, in the simple case of two different quasispecies, in a given range of the parameters of the model. 
 
\section{The stochastic model}\label{sec2}

We introduce here a stochastic differential equation that describe the population distribution, under  rare sudden mutations.
Let us first revise the selection-mutation equation \eqref{quasispecies}. 
To underline the effect of mutation, we  follow \cite{SHbook} and introduce the ``effective mutation matrix'' $M= Q-I$.
If no mutation occurs, $M$ is the null matrix; in general $\disp M=(m_{ik})_{i,k=0,1,\dots,\d}$ with $ m_{ik} =q_{ik}\in[0,1]$ if $i\neq k$ and $m_{kk}= q_{kk}-1=-\sum\limits_{i\neq k} m_{ki}\in [-1,0]$.
Equation \eqref{quasispecies} can be written as
\begin{align} \label{quasispecies1} 
\frac{d}{dt} x_k = &  \left(f_k(x) - \bar{f}(x)\right) x_k  +\sum\limits_{i=0}^{\d} f_i(x) \, m_{ik} \,  x_i  ,
\end{align}
for $t>0$ as $k=0,1,\dots \d$. Here and henceforth we have assumed that all $f_k(x)$ are nonnegative.
We remark that the term $\left(f_k(x)- \bar{f}(x)\right) x_k$ stands for the homogeneous reproduction steered by the replicator equation, 
 the term $\sum\limits_{i\neq k} f_i(x) \, m_{ik} \, x_i \ge 0 $ describes the increasing of the frequency of $k$-type yielded by birth of $k$-individuals by mutation, and the term $  f_k(x) \, m_{kk}  \, x_k \le 0$ measures the decreasing of the frequency of $k$ individuals caused by the birth of mutated descendants by $k$-type progenitors.
The underlying assumption is that  mutations happen at the same time-scale as homogeneous reproduction: at any  procreation, a fixed proportion of the progenies  shows up a mutant trait.
It seems however more realistic to describe mutations as sudden changes in the population distribution: we thus assume here that they are driven by some marked point  processes.  
To be more precise, let $(T_n,Z_n)_n$ be a sequence of random times and random marks on a filtered probability space $(\Omega,\mathcal{F},(\mathcal{F}_t),{\mathrm P})$.
The marks are chosen in the mark space $Z=\{(i,k)\in \{0,\ldots,\d\}^2,\;i\neq k\}$, i.e.~it is assumed that each mutation has progenitor of a fixed type and descendants of a different unique type.
Set
\[ N_t=\sum_n\mathbb{I}_{(T_n\leq t)},\;\;\;\;\;\;N^{ik}_t=\sum_n\mathbb{I}_{(Z_n=(i,k))}\mathbb{I}_{(T_n\leq t)},\;\;i\neq k, \phantom{bla} t\ge 0,\]
which define the processes that count respectively the total number of mutations and the number of mutations with ancestor $i$ and descendants $k$, so that $N_t=\sum_{i\neq k}N^{ik}_t$.
The intensity of $N^{ik}_t$ should depend on the ``genetic distance'' from type $i$ to type $k$. Besides, it is affected also by selection: the larger the fitness $f_i({ x_{t^-}})$, the more  type $i$ reproduces, the more often its offspring shall suffer a mutation.
Hence we take as  $(\mathcal{F}_t)$-intensity kernel 
\[ \lambda_t(dz)=\sum_{i\neq k}\lambda_{ik}f_i(x_{t^-})\mathbb{I}_{(i,k)}(dz), \] 
where $\lambda_{ik}$ are positive constants. It follows that, for each $i\neq k$, $N^{ik}$ is a point process with $(\mathcal{F}_t)$-intensity equal to $\lambda_{ik}f_i(x_{t^-})$.  We remark that when the fitness vector $f$ is constant then, for each $i\neq k$, $N^{ik}$ is a classical Poisson process of parameter $\lambda_{ik}f^i$ (see also Remark \ref{confrontoquasispecie}).

The proportion of the offspring of $i$-individuals showing a $k$-type by effect of mutation is taken constant and denoted by $\gamma_{ik}\in (0,1]$.
A stochastic dynamics for the relative frequencies arises
\begin{align} \label{quasispeciesstoch}
d x^k_t = &  \left(f_k(x_t) - \bar{f}(x_t)\right) x^k_t \, dt   + \sum\limits_{i\neq k} \gamma_{ik} \,  x^i_t d N^{ik}_t -   \sum\limits_{i\neq k} \gamma_{ki} \,  x^k_t d N^{ki}_t,
\end{align}
as $k=0,1\dots\d$.
We next address to well-posedness of the S.D.E.~\eqref{quasispeciesstoch}. In view of writing its infinitesimal generator (and, later on, its Kolmogorov equation), we introduce  the vector valued function $a(x) =\left(a_0(x),a_1(x)\dots a_{\d}(x)\right)$, with
\begin{align}\label{a}
a_k(x)= & - \left(f_k(x) - \bar{f}(x)\right) x_k ,  \qquad k=0,1\dots\d ,\\
\intertext{and  the first order discrete non-local functional} \label{J}
 {\mathcal J} \phi(x)& =  \sum\limits_{i\neq j}\lambda_{ij} f_i(x) \left[ \phi\left(x+ \gamma_{ij}x_i ({e_j}-{e_i})\right)- \phi(x) \right] .
\end{align}
Here $e_j$ stands for the unit vector pointing in the direction of the $j^{th}$ axis.
\remove{The corresponding dynamic for the frequencies vector is
\begin{align}\label{vect-freq-eq-integral-1}
d{ x_t}= -a(x_t)\,dt+\sum_{i\neq k}
\gamma_{ik}x^i_{t^-}(e_{k}- e_{i})\,dN^{ik}_t\end{align}
where $a(x)=\big(a_0(x),a_1(x),\ldots,a_{\d}(x)\big)$ is the vector valued function
\begin{align*}a_j(x)=-\big(f_j({x})- \overline{f}({x})\big) x_j,\;\;\;j=0,1,\ldots,\d.\end{align*}}
Taking into account that for $(i,k)\neq (j,l)$ the processes
$N^{ik}$ and $N^{jl}$ have no common jump times, Ito's formula for
semimartingales gives
\begin{align}\label{mg-eq} d\phi({x}_t)= \left(-a({x}_{t}) \cdot \phi({x}_{t})+ {\mathcal J} \phi(x_{t^-}) \right) dt+ dM_t\end{align}
 for each function $\phi\in\mathcal{C}^1_b(\mathbb{R}^{\d+1})$. Here $M$ is the $(\mathcal{F}_t)$-martingale defined by
\[ dM_t=\sum_{i\neq k}\left[\phi({x}_{t^-}+ \gamma_{ik}x^i_{t^-}(e_{k}-e_{i}))-\phi({x}_{t^-})\right]\,dM^{ik}_t, \]
with $\displaystyle  M^{ik}_t= N^{ik}_t - \lambda_{ik}\int_0^t f_i( x_{s})\, ds.$
The S.D.E.~equation (\ref{quasispeciesstoch}), endowed with any random initial position $\mathcal{F}_0$-measurable, is well-posed in the weak sense stated by next lemma.
Moreover it is consistent with frequency modeling, i.e.~when the starting position is in $S$ then the solution stays in $S$ for all $t\ge 0$, \textit{a.s.}.
Actually, the next Lemma holds.
\begin{lemma}\label{mg-probl}
Let $f:\mathbb{R}^{\d+1}\rightarrow \mathbb{R}^{\d+1}$ be a Lipschitz continuous function with compact support containing the simplex $S$ and such that $|f(x)|\neq 0$ if $x\in S$. Let  $B$ be the operator defined by
\[B\phi(x)= - a(x) \cdot \nabla \phi(x) + \mathbb{I}_{S}({x})\mathcal{J}\phi(x). \]
Then for every probability measure $\pi_0$ assigned on $(\mathbb{R}^{\d+1},{\mathcal B}(\mathbb{R}^{\d+1}))$, the martingale problem for $(B,\pi_0)$ is well-posed, that is there exists
a filtered probability space and a $(\d+1)$-dimensional Markov process $(X_t,\;t\geq 0)$ with initial distribution $\pi_0$ on it such that
\[ \phi(X_t)-\phi(X_0)-\int_0^tB\phi(X_s)\,ds \] is a martingale.
This process is unique in law, that is if $(\tilde{X}_t,\;t\geq 0)$ is a different solution of the
martingale problem for $(B,\pi_0)$, then $X$ and $\tilde{X}$ have
the same finite dimensional distributions.\\
Moreover, if the support of $\pi_0$ is contained in $S$ then for
all $t\geq 0$, $X_t\in S$ \textit{a.s.}.
\end{lemma}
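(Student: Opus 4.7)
The plan is to realize the dynamics generated by $B$ as a piecewise deterministic Markov process (PDMP) in the sense of Davis: between random jumps the path follows the ODE $\dot y = -a(y)$, and jumps of the form $y \mapsto y + \gamma_{ik} y_i (e_k - e_i)$ fire at the state-dependent rate $\Lambda(x) := \mathbb{I}_S(x)\sum_{i\neq k}\lambda_{ik}f_i(x)$. The hypothesis that $f$ is Lipschitz with compact support containing $S$ is precisely what is needed: both $a$ (which vanishes outside the compact support of $f$) and $\Lambda$ are bounded, and $a$ is globally Lipschitz on $\R^{\d+1}$. I would proceed in three steps -- construction of a candidate solution, verification of invariance of $S$, and uniqueness in law.

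For existence, let $\Phi_t$ denote the unique global flow of $-a$ given by Picard--Lindel\"of. Starting from $X_0 \sim \pi_0$, $T_0 = 0$, I would iteratively sample the next inter-jump time with conditional survival $\mathbb{P}(T_{n+1} - T_n > s \mid \mathcal{F}_{T_n}) = \exp\bigl(-\int_0^s \Lambda(\Phi_r(X_{T_n}))\,dr\bigr)$; set $X_t = \Phi_{t-T_n}(X_{T_n})$ on $[T_n, T_{n+1})$; at $T_{n+1}$ pick a mark $(i,k)$ with probability $\lambda_{ik} f_i(X_{T_{n+1}^-}) / \Lambda(X_{T_{n+1}^-})$; and set $X_{T_{n+1}} = X_{T_{n+1}^-} + \gamma_{ik} X^i_{T_{n+1}^-} (e_k - e_i)$. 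Since $\Lambda \le \sup\Lambda < \infty$, the counting process $(N_t)$ is dominated by a Poisson clock, hence $T_n \to \infty$ a.s.\ and $X$ is defined on all of $[0,\infty)$. Formula \eqref{mg-eq} applied to any $\phi \in C^1_b(\R^{\d+1})$ then exhibits $\phi(X_t) - \phi(X_0) - \int_0^t B\phi(X_s)\,ds$ as an $(\mathcal{F}_t)$-martingale, establishing the existence part of the statement.

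Invariance of $S$ follows from two remarks. First, $-a$ restricted to $S$ is the replicator field of \eqref{replicator}: it is tangent to every face $\{x_k = 0\}$ because $a_k$ vanishes there, and tangent to $\{\sum_k x_k = 1\}$ because $\sum_k (f_k - \bar f) x_k = \bar f - \bar f \sum_k x_k = 0$; hence $\Phi_t(S) \subseteq S$. Second, an $(i,k)$-jump from $x \in S$ conserves $\sum_j x_j$ and modifies only coordinate $i$, which becomes $(1 - \gamma_{ik}) x^i \ge 0$ (using $\gamma_{ik} \in (0,1]$), and coordinate $k$, which becomes $x^k + \gamma_{ik} x^i \ge 0$. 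Induction over the number of jumps yields $X_t \in S$ a.s.\ for all $t \ge 0$ whenever $\pi_0$ is supported in $S$.

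The main obstacle is uniqueness in law. Given any solution $(\tilde X_t)$ of the martingale problem, my plan is to recover step by step the PDMP recursion above. Set $\tilde T_1 := \inf\{t > 0 : \tilde X_t \neq \Phi_t(\tilde X_0)\}$. Testing the martingale property against $\phi \in C^1_b$ localized near the deterministic trajectory should force $\tilde X_t = \Phi_t(\tilde X_0)$ on $[0, \tilde T_1)$, since otherwise the continuous part of $\tilde X$ would fail to solve $\dot y = -a(y)$. Next, applying the martingale property to the time-dependent functional $\phi(x) \exp\bigl(-\int_0^t \Lambda(\Phi_r(\tilde X_0))\,dr\bigr)$ and using optional stopping at $\tilde T_1$ would pin down the joint law of $(\tilde T_1, \tilde X_{\tilde T_1})$, matching exactly the inter-jump survival function and the mark law used in the construction. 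The strong Markov property of solutions of a well-posed martingale problem then allows iteration over successive jumps, determining all finite-dimensional distributions. A shorter alternative would be to rewrite the S.D.E.\ \eqref{quasispeciesstoch} via thinning as an SDE driven by a Poisson random measure of constant intensity $\sup\Lambda$ with bounded Lipschitz coefficients, where pathwise uniqueness is classical (Ikeda--Watanabe).
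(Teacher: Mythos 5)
Your construction--existence--invariance scheme is in substance the same as the paper's: the authors also realize the process by interlacing the deterministic flow of $-a$ with jumps occurring at the bounded state-dependent rate $\lambda(x)=\sum_{i\neq k}\lambda_{ik}f_i(x)$. They do this by encoding the jump mechanism through a Skorohod-type kernel $K(x,\zeta)$ and a finite measure $\nu$, rewriting the dynamics as an SDE driven by a Poisson random measure, and then invoking the sequential construction of \cite{AKK}, whose hypotheses are exactly the ones you isolate (Lipschitz drift with sublinear growth, bounded total jump intensity, hence non-explosion). Your argument for the invariance of $S$ --- tangency of the replicator field to the faces of $S$ together with the fact that $x\mapsto x+\gamma_{ik}x_i(e_k-e_i)$ maps $S$ into $S$ --- is literally the paper's.

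The genuine gap is in uniqueness. Your first route is circular as written: you invoke ``the strong Markov property of solutions of a well-posed martingale problem'' in order to iterate over successive jumps, but well-posedness is precisely what is being proved; the correct order of quantifiers (as in Ethier--Kurtz) is to first show that any two solutions share the same one-dimensional marginals for every initial law, and only then deduce the Markov property and equality of finite-dimensional distributions. Moreover, the step ``testing against $\phi$ localized near the deterministic trajectory forces $\tilde X_t=\Phi_t(\tilde X_0)$ on $[0,\tilde T_1)$'' tacitly assumes that an arbitrary solution of the martingale problem has piecewise-deterministic paths; this must itself be extracted from the martingale property (e.g.\ by applying it to coordinate functions and their products to identify the semimartingale characteristics), and that identification is the actual content of the uniqueness proof. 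Your second route is essentially the paper's, but ``bounded Lipschitz coefficients, where pathwise uniqueness is classical'' glosses over two points: (i) any thinning representation with constant driving intensity necessarily contains an indicator of the form $\mathbb{I}_{(0,\lambda(x))}(\theta)$ in the jump coefficient, which is not Lipschitz in $x$, so the classical Lipschitz pathwise-uniqueness theorems do not apply verbatim --- what rescues the argument is again the boundedness of the total rate, which permits a jump-by-jump (interlacing) uniqueness argument, and this is exactly what \cite{AKK} supplies; and (ii) one must still pass from uniqueness for the SDE to uniqueness in law for the martingale problem, i.e.\ show that every solution of the martingale problem can be realized as a weak solution of the SDE. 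Neither point is fatal, but both must be addressed (or explicitly delegated to a reference such as \cite{AKK}) before the proof closes.
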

\begin{proof}
For all $\phi\in\mathcal{C}^1_b(\mathbb{R}^{\d+1})$,  $\mathbb{I}_{S}({x})\mathcal{J}\phi(x)$ can be written as
\begin{align*}
\lambda(x)
\int_{\mathbb{R}^{\d+1}}
\Big[\phi\big({x}+\mathbb{I}_{S}({x}){y}\big)-\phi({x})\Big]\,m_{x}(d{y})
\end{align*}
where
\[\lambda(x)=\sum_{i\neq k}\lambda_{ik}f_i(x)\]
and  $m_{x}(d{y})$ is the probability measure on
$\big(\mathbb{R}^{\d+1},\mathcal{B}(\mathbb{R}^{\d+1})\big)$ defined
 by
\begin{equation}\label{eq-misura}
m_{x}(d{y})=
\begin{cases}
&\displaystyle\sum_{i\neq k}\displaystyle\frac{\lambda
_{ik}f_i(x)}{\lambda(x)}\delta_{\gamma_{ik}x_i(e_{k}-e_{i})}(dy), \
\ \text{ if } x\in S\\
&\text{any probability measure, if }  \  x\not\in S.
\end{cases}
\end{equation}
Note that by the hypotheses $\lambda(x)\neq 0$ when $x\in S$.

Moreover it is easy to see by using Skorohod construction for random variables that  $\mathbb{I}_{S}({x})\mathcal{J}\phi(x)$ can also be expressed in the form
  \[\int_\Xi \big[\phi\big(x+K(x,\zeta)\big)-\phi(x)\big]\,\nu(d\zeta)\] where
$(\Xi,\Upsilon)$ is a measurable space, $(x,\zeta)\rightarrow
K(x,\zeta)$ is a measurable bounded function on
$\mathbb{R}^{\d+1}\times\Xi$ with values in $\mathbb{R}^{\d+1}$ and
$\nu(d\zeta)$ is a $\sigma$-finite measure on
$(\Xi,\Upsilon)$.

More precisely the previous equality holds for
$(\Xi,\Upsilon)=\big((0,1)^{\d}\times\mathbb{R}^+,\mathcal{B}((0,1)^{\d})\otimes\mathcal{B}(\mathbb{R}^+)\big)$, with general element denoted by $\zeta=(u_0,u_1,\ldots
u_{\d-1},\theta)$, 
\[ \nu(d\zeta)=du_0 du_1\ldots du_{\d-1}\, d\theta, \] and the
function $K$ constructed as follows (see \cite{CN}).
Fixed $x\in S$, let $Y=(Y_0,...,Y_{\d})$ be a random vector with
law $m_{x}(d{y})$ defined by (\ref{eq-misura}). Let $y_0\rightarrow F_0(y_0)$ be
the distribution function of  $Y_0$ and moreover, for $n=1,\ldots
, \d$, let $F_n(y_n\mid y_0,\ldots ,y_{n-1})$ be the distribution
function of  $Y_n$ given $Y_0=y_0,\ldots ,Y_{n-1}=y_{n-1}$, so
that $$m_x(dy)=F_0(dy_0)F_1(dy_1\mid y_0)\ldots
F_{\d}(dy_{\d}\mid y_0,\ldots ,y_{\d-1}).~$$
Finally denote by
$F_n^{-1}$ the generalized inverse of $F_n$, for $n=0,1,\ldots ,
\d$.
Then  for $x\in S$ let $K(x,\cdot):
\Xi\rightarrow\mathbb{R}^{\d+1}$ be
 defined by
\begin{align*}
&K_0({x},\zeta)=F_0^{-1}(u_0),\cr
&K_1({x},\zeta)=F_1^{-1}\big(u_1\mid K_0({x},\zeta)\big),\cr
 &K_n({x},\zeta)=F_n^{-1}\big(u_n\mid K_0({x},\zeta), K_1({x},\zeta),\ldots, K_{n-1}({x},\zeta)\big),\;\;\;n=1,\ldots,\d-1,
\end{align*}
and
$$K_{\d}({x},\zeta)=\mathbb{I}_{(0,\lambda(x))}(\theta) F_{\d}^{-1}\Big(
\theta / \lambda(x)\mid K_0({x},\zeta), K_1({x},\zeta),\ldots,
K_{\d-1}({x},\zeta)\Big).$$
For $x\notin S$ let  $K(x,\cdot)$ be identically zero.

It is to remark that the above construction implies that, for $x\in S$, we have
\begin{equation}\label{nu-and-lambda}\nu\big(\zeta\in\Xi,
|K(x,\zeta)|\neq 0\big)=\lambda(x).\end{equation}
In fact the distribution functions used in the construction have no jumps at zero so that none of the general inverses yields zero on a set of positive measure.  Moreover for all
$x$ and $\zeta$, $|K(x,\zeta)|\leq 2$.
Then existence of a solution of the martingale problem for $(B,\pi_0)$ follows by
existence of a solution of the SDE
\begin{equation}\label{vect-freq-eq-integral-2}
X_t=X_0 - \int_0^t a(X_s)\,ds+\int_0^t\int_\Xi
K(X_{s^-},\zeta)\,\mathcal{N}(ds\times d\zeta),\;\;t\geq
0\end{equation} where $\mathcal{N}(dt\times d\zeta)$ is a Poisson
random measure on
$\big(\mathbb{R}^{\d+1},\mathcal{B}(\mathbb{R}^{\d+1})\big)$ with
mean measure $dt\times \nu(d\zeta)$ and $X_0$ is a random variable
on the same probability space with distribution $\pi_0$.~A strong
non-explosive Markov solution of (\ref{vect-freq-eq-integral-2})
 exists by \cite{AKK}, where more
general SDE with jumps are treated.~Indeed, under our regularity assumption
on $f$, not only the drift coefficient verifies sub-linear growth
and Lipschitz condition but also the intensity of the
point process which counts the total number of jumps in (\ref{vect-freq-eq-integral-2}) is bounded.~In fact
 (\ref{nu-and-lambda}) joint with the regularity of $f$ gives
\[
\sup_{x\in S}\nu\big(\zeta\in\Xi, |K(x,\zeta)|\neq 0\big)=\sup_{x\in
S}\lambda(x)<+\infty.
\]
Again we refer to \cite{AKK} for deriving uniqueness in law of
$(X_t,\;t\geq 0)$.

Finally, following \cite{AKK}, the construction of the strong
solution of equation (\ref{vect-freq-eq-integral-2}) uses
sequentially the deterministic and the stochastic part of the
dynamic. So if the support of $\pi_0$ is contained in $S$ then
every trajectory of the solution verifies $X_t\in S$, for all
$t\geq 0$, when either the deterministic dynamic or the stochastic
dynamic do not allow to the trajectories starting in $S$ to leave
$S$. As it is well-known this is true for the deterministic
dynamic. As far as the stochastic dynamic is concerned, it is
sufficient to note that when $x\in S$ then
$x+\gamma_{ik}x_i(e_{k}-e_{i})\in S$.
\end{proof}
\begin{proposition}The frequencies process is
well-defined in law for all $t\geq 0$.
\end{proposition}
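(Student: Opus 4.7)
The plan is to deduce this proposition from Lemma~\ref{mg-probl} by a suitable extension of the fitness vector. The SDE~\eqref{quasispeciesstoch} only involves the values of $f$ on $S$, so assuming $f$ is Lipschitz and nonzero on $S$ (which is true in the standard cases of constant fitness or $f(x)=Ax$, since $S$ is compact), I would extend it to a globally Lipschitz function on $\R^{\d+1}$ with compact support containing $S$ and satisfying $|f(x)|\neq 0$ for $x\in S$. A standard construction is to multiply any global Lipschitz extension by a smooth cutoff that equals $1$ on a neighborhood of $S$ and vanishes outside a slightly larger compact set. With this extension, the hypotheses of Lemma~\ref{mg-probl} are satisfied.

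For any probability measure $\pi_0$ supported in $S$, Lemma~\ref{mg-probl} then provides a Markov process $(X_t)_{t\ge 0}$, unique in law, solving the martingale problem for $(B,\pi_0)$ and satisfying $X_t\in S$ \textit{a.s.}\ for all $t\ge 0$. To identify $X$ with a solution of \eqref{quasispeciesstoch}, one direction is already contained in the excerpt: the It\^o formula \eqref{mg-eq} shows that any solution of \eqref{quasispeciesstoch} solves the martingale problem for $B$, since along a trajectory staying in $S$ one has $\mathcal{J}\phi(x)=\mathbb{I}_{S}(x)\mathcal{J}\phi(x)$. In the other direction, the strong SDE representation \eqref{vect-freq-eq-integral-2} yields counting processes $N^{ik}$ by restricting the Poisson random measure $\mathcal{N}$ to the subsets of $\Xi$ on which the map $K(X_{s^-},\cdot)$ produces the jump $\gamma_{ik}X^i_{s^-}(e_k-e_i)$; by construction of $K$ via generalized inverses, each $N^{ik}$ has $(\mathcal{F}_t)$-intensity $\lambda_{ik}f_i(X_{t^-})$, and \eqref{quasispeciesstoch} follows.

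Finally, because $X_t\in S$ almost surely at all times, the finite-dimensional distributions depend only on $f|_S$ and not on the chosen extension, so the frequencies process is well-defined in law on $[0,\infty)$. The main technical point is the correspondence between the jumps of the Poisson random measure in \eqref{vect-freq-eq-integral-2} and the marked point processes $N^{ik}$ appearing in \eqref{quasispeciesstoch}; once this is unpacked through the explicit form of $K$ built in the proof of Lemma~\ref{mg-probl}, the proposition reduces entirely to the existence and uniqueness statement already established there.
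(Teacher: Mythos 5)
Your proposal is correct and follows essentially the same route as the paper: the frequencies process solves the martingale problem for $(B,\delta_x)$ by the It\^o formula \eqref{mg-eq}, and well-posedness of that martingale problem (Lemma \ref{mg-probl}) pins down its law. The paper states this in one line; your additional details on extending $f$ off $S$ and on recovering the $N^{ik}$ from the Poisson random measure are a sound elaboration of what the paper leaves implicit, not a different argument.
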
\begin{proof}It follows immediately by previous
lemma recalling (\ref{mg-eq}), i.e.~that the frequencies process
starting at $x\in S$ solves for all $t\geq 0$ the martingale
problem for
 $(B,\delta_x)$.\end{proof}
\begin{remark}\label{confrontoquasispecie}
Let us remark that when the fitness vector $f$ assumes constant
value on $S$, then the point processes $N^{ik}_t$ are classical Poisson processes of parameter $\lambda_{ik}f^i$ and (\ref{quasispeciesstoch}) can be
written in form (\ref{vect-freq-eq-integral-2}) with
\[\Xi=Z=\{(i,k)\in \{0,\ldots,\d\}^2,\;i\neq k\},\;\;\;\;
K(x,(i,k))=\mathbb{I}_{S}(x)\gamma_{ik}x_i(e_{k}-e_{i})\]
\[\mathcal{N}([0,t]\times
(i,k))=N^{ik}_t,\;\;\;\;\nu(\{(i,k)\})=\lambda_{ik}f^i.\]
 So \cite{AKK} directly applies to our model, and the
frequencies process is the strong solution of equation
(\ref{quasispeciesstoch}) so that
\begin{align*}
dx^k_t = &   \left[ \left(f_k(x_t) - \bar{f}(x_t)\right) x^k_t +
\sum\limits_{i=0}^{\d} f_i(x_{t} ) \lambda_{ik} \gamma_{ik} x^i_t
\right] dt\\ \label{quasispeciesstochfalsa} & + \sum\limits_{i\neq k} \gamma_{ik} x^i_{t^-} dM^{ik}_t -\sum\limits_{i\neq k}
\gamma_{ki} x^k_{t^-} dM^{ki}_t.
\end{align*}
By taking  $m_{ik}= \lambda_{ik} \gamma_{ik}$  as $i \neq k$, $
m_{kk}= -\sum\limits_{i\neq k} \lambda_{ki} \gamma_{ki}$,  the
stochastic dynamics \eqref{quasispeciesstoch} is nothing but the
quasispecies equation \eqref{quasispecies1}, perturbed by a
martingale term.
\end{remark}

\section{The Kolmogorov equation}\label{sec3}

We next address to the  expected value of the frequencies process
\[ u_k(x,t) = \ecal\left( x^k_t \, \big| \, x_0= x \right) , \]
as $k=0,1,\dots \d$,
and deduce rigorously that it satisfies its Kolmogorov equation.
\begin{proposition}\label{Kolm}
For each $k=0,1\dots\d$ we have 
\[
\left\{\begin{array}{ll}
 \partial_tu_k(x,t) + {a}(x) \cdot \nabla u_k(x,t) ={\mathcal J} u_k (x,t), \qquad & x\in S, \; t>0 \\
u_k(x,0)=x_k, & x\in S ,\; t=0, \end{array}\right.
\]
 \end{proposition}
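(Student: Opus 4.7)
The plan is to derive the PDE by combining the martingale characterization of Lemma \ref{mg-probl} with the Markov property of the frequencies process. The initial condition $u_k(x,0)=x_k$ is immediate from the definition.

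First, I would apply the martingale identity to the coordinate function $\phi(y)=y_k$. As it is not in $\mathcal{C}^1_b(\mathbb{R}^{\d+1})$, I would replace it by any $\tilde\phi\in\mathcal{C}^1_b(\mathbb{R}^{\d+1})$ equal to $y_k$ on a neighborhood of $S$; since $X_t\in S$ almost surely by Lemma \ref{mg-probl}, we have $\tilde\phi(X_t)=X^k_t$ a.s. A direct computation with $\nabla\tilde\phi=e_k$ on $S$ and $\tilde\phi(y+\gamma_{ij}y_i(e_j-e_i))-\tilde\phi(y)=\gamma_{ij}y_i(\delta_{jk}-\delta_{ik})$ gives, for $x\in S$,
\[
B\tilde\phi(x)=(f_k(x)-\bar f(x))\,x_k+\sum_{i\neq j}\lambda_{ij}\gamma_{ij}f_i(x)\,x_i(\delta_{jk}-\delta_{ik}).
\]

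Next, I would invoke the Markov property: for $t,h>0$,
\[
u_k(x,t+h)=\ecal[X^k_{t+h}\mid X_0=x]=\ecal[u_k(X_h,t)\mid X_0=x].
\]
Provided $u_k(\cdot,t)\in\mathcal{C}^1_b$ (after a smooth extension off $S$), Lemma \ref{mg-probl} applied to $u_k(\cdot,t)$ yields
\[
u_k(x,t+h)-u_k(x,t)=\ecal\!\left[\int_0^h Bu_k(\cdot,t)(X_s)\,ds\,\Big|\,X_0=x\right].
\]
Dividing by $h$ and letting $h\to 0^+$, by right-continuity of $X$ and continuity of $Bu_k(\cdot,t)$ at $x$, one obtains
\[
\partial_t u_k(x,t)=Bu_k(\cdot,t)(x)=-a(x)\cdot\nabla u_k(x,t)+\mathcal{J}u_k(\cdot,t)(x),
\]
which is the claimed equation on $S$.

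The main obstacle is the spatial regularity of $u_k(\cdot,t)$: to invoke the martingale identity on it one needs (at least) $u_k(\cdot,t)\in\mathcal{C}^1_b$. I would prove this by differentiating the SDE (\ref{vect-freq-eq-integral-2}) in the initial datum, using Lipschitz regularity of $a$, $f$ and the jump amplitudes together with a Gr\"onwall-type argument to show that $x\mapsto X_t^x$ is Lipschitz uniformly on compact time intervals, so that $x\mapsto\ecal[\tilde\phi(X_t^x)]=u_k(x,t)$ inherits the same regularity. An alternative route, less demanding and more consonant with the rest of Section \ref{sec3}, is to interpret the PDE in the mild/integrated form of a semigroup identity and defer the recovery of the strong form to the existence theory.
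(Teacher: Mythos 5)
Your overall strategy is sound and is, at bottom, the same as the paper's: the Proposition is reduced to showing that $u_\phi(x,t)=T_t\phi(x)=\mathcal{E}\bigl(\phi(X^x_t)\bigr)$ satisfies $\partial_t u_\phi=Bu_\phi$, with $B$ the generator from Lemma \ref{mg-probl} (your truncation of $\phi(y)=y_k$ to a $\mathcal{C}^1_b$ function agreeing with $y_k$ near $S$, and the explicit computation of $B\tilde\phi$, are both fine). The difference is in how the last step is closed. The paper applies Dynkin's formula to the \emph{initial datum} $\phi$ and Fubini to obtain $T_t\phi=\phi+\int_0^tT_sB\phi\,ds$, verifies that $\phi$ lies in the domain of the generator via the uniform limit \eqref{lim-generatore}, and then invokes the abstract semigroup identity $BT_t\phi=T_tB\phi$ (citing \cite{Lamp}); the concrete operator is thus only ever applied to the smooth function $\phi$, and the spatial regularity of $u_\phi(\cdot,t)$ is absorbed into the statement $T_t\phi\in\mathcal{D}(B)$. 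You instead apply Dynkin's formula to $u_k(\cdot,t)$ itself via the Markov property, which is the probabilistic unpacking of the same commutation identity but places the regularity burden squarely on you: you must show $u_k(\cdot,t)\in\mathcal{C}^1_b$ before the martingale identity may be invoked on it.

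That is where your proposal has a genuine gap. The Gr\"onwall argument you sketch controls $|X^x_t-X^y_t|$ in terms of $|x-y|$ and therefore yields only \emph{Lipschitz} continuity of $x\mapsto u_k(x,t)$, which is not enough to apply $B$ classically (nor even to make sense of $\nabla u_k$ pointwise everywhere). To close the argument along your route you would need differentiability of the stochastic flow $x\mapsto X^x_t$ in the initial datum — plausible here since the drift and the jump amplitudes $\gamma_{ik}x_i(e_k-e_i)$ are smooth in $x$, but it is a separate estimate on the derivative flow, not a consequence of the Lipschitz bound. Your suggested alternative, namely working with the integrated semigroup identity and deferring the pointwise form, is essentially what the paper does, and is the cleaner way out.
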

 We recall that $a$ and  $\jcal$ have been introduced in \eqref{a} and \eqref{J}, respectively.

Proposition \ref{Kolm} follows readily by next result about the semigroup on $B(\mathbb{R}^{D+1})$ defined by
\[ T_t\phi(x)=\mathcal{E}\left(\phi(X^x_t)\right).\]
Here, $X^x_t$ denotes the solution at time $t$ of \eqref{quasispeciesstoch}, with deterministic starting position $x\in \mathbb{R}^{\d+1}$.

\begin{lemma} Let $f$ and $B$  be as in Lemma \ref{mg-probl}.
Then $B$ is the infinitesimal generator of $(T_t,\;t\geq 0)$ with domain $\mathcal{D}(B)$ containing $\mathcal{C}^2_K(\mathbb{R}^{D+1})$. Moreover for each
$\phi\in \mathcal{C}^2_K(\mathbb{R}^{D+1})$ the scalar function $(x,t)\rightarrow
u_{\phi}(x,t)=T_t\phi(x)$ satisfies the Kolmogorov equation
\[\left\{\begin{array}{ll}
\partial_t  u_\phi (x,t) + a(x) \nabla u_{\phi}(x,t) = \mathcal{J}u_{\phi}(x,t),
\\
u_{\phi}(x,0)=\phi(x).
\end{array}\right.\]
\end{lemma}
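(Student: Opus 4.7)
The plan is to build everything on top of the Itô-type identity \eqref{mg-eq} already established in the proof of Lemma \ref{mg-probl}. For $\phi \in \mathcal{C}^2_K(\mathbb{R}^{D+1}) \subset \mathcal{C}^1_b(\mathbb{R}^{D+1})$, I would integrate \eqref{mg-eq} from $0$ to $t$ along the strong solution $X^x$ starting at the deterministic point $x$, and then take expectations. Because $\phi$ has compact support and the jumps of $X^x$ are uniformly bounded (by $2$, as noted in Lemma \ref{mg-probl}), the martingale $M$ appearing in \eqref{mg-eq} is a true martingale with zero mean. Invoking the almost sure invariance of $S$ from Lemma \ref{mg-probl}, the integrand $-a(X^x_s)\cdot\nabla\phi(X^x_s)+\mathcal{J}\phi(X^x_{s^-})$ coincides, for $x\in S$, with $B\phi(X^x_{s^-})$; combined with Fubini (justified by boundedness of $B\phi$) this yields Dynkin's formula
\[
T_t\phi(x) - \phi(x) = \int_0^t T_s(B\phi)(x)\,ds.
\]

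Dividing by $t$ and letting $t\to 0^+$, the càdlàg regularity of $X^x$ together with dominated convergence give $T_s(B\phi)(x) \to B\phi(x)$, whence
\[
\lim_{t\to 0^+}\frac{T_t\phi(x)-\phi(x)}{t} = B\phi(x).
\]
This proves $\phi \in \mathcal{D}(B)$ and identifies the generator with $B$ on $\mathcal{C}^2_K(\mathbb{R}^{D+1})$.

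For the Kolmogorov equation, set $u_\phi(x,t)=T_t\phi(x)$. The Markov property established in Lemma \ref{mg-probl} furnishes the semigroup law $T_{t+h}=T_tT_h$. Combining it with Dynkin's formula applied to $T_h\phi-\phi$ yields
\[
\frac{u_\phi(x,t+h)-u_\phi(x,t)}{h} = \frac{1}{h}\int_0^h T_t T_s(B\phi)(x)\,ds,
\]
which tends to $T_t(B\phi)(x)$ as $h\to 0^+$ by continuity of $s\mapsto T_s(B\phi)(x)$, so that $\partial_t u_\phi(x,t) = T_t(B\phi)(x)$. The remaining task is to identify $T_t(B\phi)$ with $B(T_t\phi)=-a\cdot\nabla u_\phi+\mathcal{J}u_\phi$ on $S$, i.e.\ to show that $T_t$ leaves $\mathcal{D}(B)$ invariant and commutes with $B$ there. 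The initial condition $u_\phi(x,0)=\phi(x)$ is immediate from the definition of $T_t$.

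The main obstacle lies precisely in the commutation step $T_t(B\phi)=B(T_t\phi)$: to make sense of the right-hand side pointwise one needs enough spatial smoothness of $u_\phi(\cdot,t)$ for $\nabla u_\phi$ to be defined. I would handle this through a stochastic-flow argument, exploiting that the Lipschitz drift $a$ and the bounded jump kernel constructed in Lemma \ref{mg-probl} permit a differentiable flow $x\mapsto X^x_t$; spatial $\mathcal{C}^1$ regularity of $u_\phi(\cdot,t)$ and the desired commutation both follow. The rest of the identification is routine semigroup theory.
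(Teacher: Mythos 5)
Your skeleton coincides with the paper's: both pass through It\^o's formula \eqref{mg-eq} plus Fubini to obtain Dynkin's identity $T_t\phi=\phi+\int_0^t T_sB\phi\,ds$, divide by $t$ to identify the generator on $\mathcal{C}^2_K$, and then derive the Kolmogorov equation from the semigroup commutation $BT_t\phi=T_tB\phi$. Two points diverge. First, to conclude $\phi\in\mathcal{D}(B)$ for the generator of the semigroup on $B(\mathbb{R}^{D+1})$ you need the difference quotient to converge to $B\phi$ in the sup norm, i.e.\ uniformly in $x$; your dominated-convergence argument is only pointwise. The paper closes this with the uniform estimate $\mathcal{E}\bigl(|B\phi(X^x_s)-B\phi(x)|\bigr)\le C(\phi)\,s$, with $C(\phi)$ independent of $x$, coming from the Lipschitz bounds on $f$, $a$ and $\phi$. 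This is an easy repair, but it is not optional.

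Second, and more seriously, your treatment of the commutation step relies on a differentiable stochastic flow $x\mapsto X^x_t$. That route is unlikely to work as stated: the jump kernel $K(x,\zeta)$ built in Lemma \ref{mg-probl} via generalized inverses of the conditional distribution functions of $m_x$ is not differentiable --- indeed not even continuous --- in $x$ for fixed $\zeta$, since a small change in $x$ changes the weights $\lambda_{ik}f_i(x)/\lambda(x)$ and hence which atom $\gamma_{ik}x_i(e_k-e_i)$ the quantile transform selects. So differentiability of the flow, and with it spatial $\mathcal{C}^1$ regularity of $u_\phi(\cdot,t)$, does not simply ``follow'' and would require a different argument. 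The paper sidesteps the issue by invoking the abstract semigroup facts (Lamperti): if $\phi\in\mathcal{D}(B)$ then $T_t\phi\in\mathcal{D}(B)$ and $BT_t\phi=T_tB\phi$, so that $\partial_t u_\phi=T_tB\phi=BT_t\phi$ with $B$ read as the abstract generator. You are right that rewriting $BT_t\phi$ in the explicit form $-a\cdot\nabla u_\phi+\mathcal{J}u_\phi$ presupposes spatial regularity of $u_\phi$ that the paper does not spell out either (it is supplied later by the analytic study of the Kolmogorov equation); but replacing that tacit step with a flow-differentiability claim that fails for this particular jump kernel is not a fix.
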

\begin{proof}
Any function $\phi\in \mathcal{C}^2_K$ belongs to the domain of $B$ since
\[
\lim_{t\rightarrow 0^+}\sup_{x\in  \mathbb{R}^{D+1}}\Big|\frac{T_t\phi(x)-\phi(x)}t-B\phi(x)\Big|=0.
\]
In fact Ito's formula joint with Fubini's theorem  gives
\begin{equation}\label{semigr-eq}T_t\phi(x)=\phi(x)+\int_0^tT_s B\phi(x)\,ds\end{equation} so that the above limit coincides with
\begin{equation}\label{lim-generatore}
\lim_{t\rightarrow 0^+}t^{-1}\sup_{x\in \mathbb{R}^{D+1}}\Big|\int_0^t \mathcal{E}\left(B\phi(X^x_s)-B\phi(x)\right)ds\Big|=0.
\end{equation}
Then equality (\ref{lim-generatore}) follows by considering that for all $x\in \mathbb{R}^{\d+1}$ the
regularity of $f$ and $\phi$ implies that
 $$\mathcal{E}\big(|B\phi(X^x_s)-B\phi(x)|\big)\leq C(\phi)\,s,$$
with $C(\phi)$ a positive constant depending on $\phi$.
Finally the thesis follows from (\ref{semigr-eq}) by recalling that
(see, e.g. \cite{Lamp}) if $\phi\in\mathcal{D}(B)$ then
$T_t\phi\in\mathcal{D}(B)$ and $BT_t\phi=T_tB\phi$, for each $t\geq
0$.
\end{proof}

\subsection{Dimensional reduction}
We remark that the number of variable can be reduced by setting $x_0=1-\sum\limits_{k=1}^{\d}x_k$. The new variable, that we still denote by $x=(x_1\dots x_{\d})$, lives in the closed set 
\[ \Sigma=\{x=(x_1\dots x_{\d}) \in \R^{\d} \, : \, x_k \ge 0 \, \mbox{ as } k=1\dots\d, \,  \sum\limits_{k=1}^{\d}x_k \le 1\}.\]
In all the following, we continue to write ${a}$ and $f$ for the respective functions depending on the $\d$ variables  $x=(x_1\dots x_{\d})\in \Sigma$.
As or the non-local term $\jcal$, it becomes
\begin{align*}
 {\mathcal J} \phi(x) = & \sum\limits_{\substack{i,j=1\dots\d\\i\neq j}}\lambda_{ij} f_i(x) \left[ \phi\left(x+ \gamma_{ij}x_i ({\bf e_j}-{\bf e_i})\right)- \phi(x) \right] \\
& +\sum\limits_{i=1\dots\d}\lambda_{i0} f_i(x) \left[ \phi\left(x- \gamma_{i0}x_i {\bf e_j}\right)- \phi(x) \right] \\
& + \sum\limits_{j=1\dots\d}\lambda_{0j} f_0(x) \left[ \phi\left(x+ \gamma_{0j}(1-\sum\limits_{i=1\dots\d}x_i) {\bf e_j}\right)- \phi(x) \right]
 ,
\end{align*}
for any continuous scalar function $\phi\in{\mathcal C}(\Sigma;\R)$.

Next subsections are devoted to the analytical study of the decoupled system
\begin{align} \label{K}
\left\{\begin{array}{ll}
 \partial_tu_k(x,t) + {a}(x) \cdot \nabla u_k(x,t) ={\mathcal J} u_k (x,t), \qquad & x\in \Sigma, \; t>0 \\
u_k(x,0)=x_k, & x\in \Sigma,\; t=0, \end{array}\right.
\end{align}
as $k=1\dots\d$.

\subsection{Mild solutions}

We establish global well-posedness of  the Cauchy problem \eqref{K}.
As a preliminary, we notice that the vector field ${a}(x)$  does not point outward at the boundary of $\Sigma$. 

\begin{remark}\label{edges}
The projection of the vector field $a$ orthogonal to the sides of the  boundary of $\Sigma$ is always zero.
Actually at $ x_k=0$ we have ${a}(x) \cdot {\bf e_k} =   a_k(x)=0$, while at $\sum\limits_{k=1}^{\d}x_k=1$ we have 
\[ {a}(x) \cdot \left(\sum\limits_{k=1}^{\d}{\bf e_k}\right) = \sum\limits_{k=1}^{\d} a_k (x)
=  -\sum\limits_{k=1}^{\d} x_k \, f_k(x) + \bar{f}(x)  \sum\limits_{k=1}^{\d} x_k   =  - x f(x) + \bar{f}(x) =0,
\]
because $x_0=1- \sum\limits_{k=1}^{\d}x_k =0$.
\end{remark}

\remove{DOPO \begin{definition}\label{ASiE}
We say that a strategy $x^{\ast}\in T$ is {\bf{asymptotically stable in expectation}} ({\bf{ASiE}}) if there is $\ep>0$ so that $u(x,t)\to x^{\ast}$ as $t\to+\infty$ for all $x\in T$ with $|x-x^{\ast}|<\ep$. 
\end{definition}}
As a consequence,  the flux of the Cauchy problem for the autonomous equation
\begin{equation}\label{char} \left\{\begin{array}{l}
 \dot y   = {a}(y) , \\
 y(0)=x , 
\end{array}\right.\end{equation}
is well defined and maps $\Sigma$ into itself. It is worst noticing that it is nothing than the solution to the replicator equation \eqref{replicator}, after the dimensional reduction.
In the following, we shall write $Y(x,t)$ for the solution of \eqref{char} starting at $x$. 
 For any $x,t$, the function $s\mapsto Y(x,s-t)$ is the characteristic line of through $x,t$ for problem \eqref{K}.

\begin{definition}
A {\bf{mild solution}} is a function $u\in \ccal\left([0,T)\times\Sigma ; \R^{\d}\right)$ satisfying the integral formula
\begin{align}\label{uformula} 
& u_k(x,t)= Y_k(x,-t) + \int_0^t \jcal u_k\left(Y(x,s-t),s\right) ds ,
\end{align}
as $k=1\dots\d$.
\end{definition}

Following the method of characteristics, we define \[ v(x,s,t)= u(Y(x,s-t),s)\]  and notice that for any $t$, $v$ solves an integro-differential problem
\begin{align}\label{ipde}
 \left\{\begin{array}{ll}
\partial_s v_k(x,s,t)  
 = \Ical v_k(x,s,t)  \qquad \quad & x\in\Sigma, \; s >0 ,\\
v_k(x,0,t)= Y_k(x,-t) &  x\in \Sigma , \; s=0 ,\end{array}\right.
\end{align}
as $k=1\dots\d$. Here 
\begin{align*}
 \Ical v_k(x,s,t)  = & \sum\limits_{\substack{i,j=1\dots\d\\i\neq j}}\lambda_{ij} f_i(Y(x,s-t)) \left[ v_k(x+\delta_{ij}(x,s-t),s,t)-v_k(x,s,t)\right] \\
& +\sum\limits_{i=1\dots\d}\lambda_{i0} f_i(Y(x,s-t)) \left[ v_k(x+\delta_{i0}(x,s-t),s,t)-v_k(x,s,t)\right] \\
& + \sum\limits_{j=1\dots\d}\lambda_{0j} f_j(Y(x,s-t)) \left[ v_k(x+\delta_{0j}(x,s-t),s,t)-v_k(x,s,t)\right]
 , \\
\delta_{ij}(x,r) = & Y\left(Y(x,r)+\gamma_{ij}Y_i(x,r)({\bf e_j}-{\bf e_i})\,  , \, -r\right) -x, \\
\delta_{i0}(x,r) = & Y\left(Y(x,r)-\gamma_{i0}Y_i(x,r){\bf e_i}) \, , \, -r\right) -x, \\
\delta_{j0}(x,r) = & Y\big(Y(x,r)+\gamma_{0j}(1-\sum\limits_{i=1\dots\d}Y_i(x,r)){\bf e_j} \, , \, -r\big) -x.
\end{align*}

The Cauchy problem \eqref{ipde}  is seen in the mild sense, actually
\begin{align} \label{mildipde} 
 v_k(x,s,t)= & Y_k(x,-t) + \int_0^s \Ical v_k(x,r,t) dr . 
\end{align}
We next use a fixed point argument to solve \eqref{ipde}. This brings a mild solution also to \eqref{K} because
\[ v_k(x,t,t)= u_k(x,t)  .\]
The following proof is standard, but we report it for the sake of completeness.

\begin{proposition}\label{wpmild}
The problem \eqref{K} has an unique global solution $u\in \ccal\left(\Sigma\times[0,+\infty);\R^{\d}\right)$.
\end{proposition}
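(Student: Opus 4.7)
The plan is to treat \eqref{mildipde} as a fixed-point equation in $s$, with $t$ acting as a parameter, exploiting that $\Ical$ is a \emph{bounded} linear operator on continuous functions thanks to the regularity of $f$. First I would record the a priori bound on $\Ical$: since $f$ has compact support containing $\Sigma$, one has $M:=\max_i\sup_\Sigma f_i<\infty$, and therefore
\[ |\Ical v(x,s,t)|\le L\,\|v\|_\infty,\qquad L:=2M\sum_{i\neq j}\lambda_{ij}, \]
for every $v\in \ccal(\Sigma\times[0,T]\times[0,T];\R^{\d})$. A preliminary check is geometric: one must verify that each argument of $v$ occurring in $\Ical$ actually lies in $\Sigma$. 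This follows from Remark \ref{edges} (which, since $a$ is tangent to $\partial\Sigma$, implies that the replicator flow $Y(\cdot,r)$ is a bijection of $\Sigma$ for every $r\in\R$) combined with the elementary fact, already used in Lemma \ref{mg-probl}, that $y+\gamma_{ij}y_i(e_j-e_i)\in\Sigma$ whenever $y\in\Sigma$; together these imply $x+\delta_{ij}(x,s-t)\in\Sigma$ for $x\in\Sigma$.

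Next I would set up local existence on the Banach space $\mathcal{X}_T:=\ccal(\Sigma\times[0,T]\times[0,T];\R^{\d})$ with the sup norm, defining $\Phi:\mathcal{X}_T\to\mathcal{X}_T$ by
\[ \Phi(v)_k(x,s,t):=Y_k(x,-t)+\int_0^s \Ical v_k(x,r,t)\,dr. \]
Continuity of $\Phi(v)$ in $(x,s,t)$ follows from the continuous dependence of the ODE flow $Y$ on initial data and time, together with the Lipschitz character of $f$; the a priori bound just recalled gives
\[ \|\Phi(v_1)-\Phi(v_2)\|_\infty\le LT\,\|v_1-v_2\|_\infty. \]
Choosing $T_0<1/L$ renders $\Phi$ a contraction, so Banach's fixed-point theorem yields a unique $v\in\mathcal{X}_{T_0}$ satisfying \eqref{mildipde} on $[0,T_0]\times[0,T_0]$.

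Global extension is immediate because $L$ is independent of both the initial datum at $s=0$ and the range of $t$: iterating on $[nT_0,(n+1)T_0]$ (using $v(\cdot,nT_0,t)$ as the new initial datum in place of $Y(x,-t)$) and likewise enlarging the $t$-interval in steps of $T_0$, one obtains $v\in \ccal(\Sigma\times[0,\infty)\times[0,\infty);\R^{\d})$. Setting $u(x,t):=v(x,t,t)$ then produces a global mild solution of \eqref{K}. Uniqueness follows from a Grönwall argument directly on \eqref{uformula}: given two mild solutions $u_1,u_2$, the function $\varphi(t):=\sup_\Sigma|u_1(\cdot,t)-u_2(\cdot,t)|$ satisfies $\varphi(t)\le L\int_0^t\varphi(s)\,ds$, forcing $\varphi\equiv 0$. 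The main obstacle is not analytical — the argument is a textbook Picard iteration for a linear bounded perturbation of a transport equation — but notational, namely keeping track of the three variables $(x,s,t)$ and making sure that all shifted arguments remain in $\Sigma$ and that the flow $Y$ is correctly handled in both time directions; once these checks are in place, the rest is routine.
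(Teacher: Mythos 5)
Your argument is correct and follows essentially the same route as the paper: both reduce \eqref{K} to the integral equation \eqref{mildipde} along the characteristics $Y$ and apply the Banach fixed-point theorem to the operator $v\mapsto Y(\cdot,-t)+\int_0^s\Ical v\,dr$, after checking that the shifted arguments stay in $\Sigma$. The only (favorable) difference is that you exploit the linearity and boundedness of $\Ical$ to contract on the whole space with a time step $T_0<1/L$ independent of the data, which makes the global continuation immediate, whereas the paper works on balls $B_r$ and obtains globality from a sequence of existence times $S_n\to+\infty$.
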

\begin{proof}
Let $S,T>0$, and $\chi$ the set of continuous scalar functions of $(x,s,t)\in\Sigma\times[0,S]\times[0,T]$,
which is a Banach space endowed with the sup-norm.
For $k=1\dots\d$ and $r>0$ , let $B^k_r$ be the set  of functions of $\chi$ such that 
\[ \sup\{ \|v(x,s,t)-Y_k(x,-t) \| \, : \, x\in\Sigma, 0\le s\le S , 0\le t\le T \} \le r ,\]
and ${\mathcal T}$ the operator
\[{\mathcal T} v (x,s,t)= Y(x,-t)+ \int_0^s \Ical v(x,r,t) \, dr . \]
${\mathcal T}$ maps $B_r$ into itself because for $v\in B_r$ we have 
\begin{align*}
\|{\mathcal T} v - v_o\| \le 2 \|f\|_{\infty} s \| v\| \le 2\|f\|_{\infty} S (r+\|v_o\|) \le r ,
\end{align*}
provided that $S\le r/2\|f\|_{\infty}(r+\|v_o\|)$.
Moreover ${\mathcal T}$ is a contraction since
\begin{align*}
\|{\mathcal T} v - {\mathcal T} w \| \le 2 \|f\|_{\infty} s \|v-w\| \le 2\|f\|_{\infty} S \|v-w\| \le \frac{r}{r+\|v_o\|} \|v-w\| .
\end{align*}

It follows by contraction Theorem that \eqref{ipde} has an unique mild solution $v\in \ccal\left([0,S]\times[0,S] \times\Sigma \right)$ at least for $S=r/2\|f\|_{\infty} (r+\|v_o\|)$.
Moreover, since $v\in B_r$, we have $\|v\|\le r+\|v_o\|$. Then the fixed point argument can be iterated to get, at any step $n$, a solution defined until $S_{n} = \frac{1}{2\|f\|_{\infty}}\left(\frac{r}{r+\|v_o\|}+ \sum\limits_{k=1}^{n}\frac{kr+\|v_o\|}{(k+1)r+\|v_o\|}\right)$. As $S_n\to +\infty$, existence and uniqueness of a global solution follows.
\end{proof}

\subsection{Viscosity solution}
Using the tools of viscosity theory, one can eventually prove the following result

\begin{proposition}\label{comparison}
 Let $u$ and $v$ be, respectively, viscosity sub/supersolutions of  \eqref{K} with $u(x,0)\le v(x,0)$ for all $x\in \Sigma$. Then $u(x,t)\le v(x,t)$ for all $x\in \Sigma$ and $t>0$.
\end{proposition}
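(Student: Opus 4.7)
The plan is to run the standard doubling-of-variables argument from viscosity theory, exploiting that the nonlocal operator $\mathcal{J}$ is bounded and has nonnegative kernels, hence is ``monotone'' at maxima; no singular/regular splitting \`a la Barles--Imbert is needed.

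First I would fix $T>0$ and argue by contradiction, assuming $M:=\sup_{\Sigma\times[0,T]}(u-v)>0$. To obtain a strict inequality I would replace $u$ by $\tilde u(x,t):=u(x,t)-\eta t$ for $0<\eta<M/(2T)$; since $\mathcal{J}$ annihilates functions that do not depend on $x$, $\tilde u$ is a viscosity subsolution of $\partial_t\tilde u+a(x)\cdot\nabla\tilde u-\mathcal{J}\tilde u\le -\eta$, while still $\sup(\tilde u-v)\ge M/2>0$. Next I would double variables with
\begin{equation*}
\Phi_\varepsilon(x,y,t,s)=\tilde u(x,t)-v(y,s)-\tfrac{1}{2\varepsilon}\bigl(|x-y|^2+|t-s|^2\bigr)
\end{equation*}
on the compact set $\Sigma\times\Sigma\times[0,T]^2$, and call $(x_\varepsilon,y_\varepsilon,t_\varepsilon,s_\varepsilon)$ a maximizer. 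The classical penalty lemma (Crandall--Ishii--Lions user's guide, Lemma~3.1) yields $|x_\varepsilon-y_\varepsilon|^2/\varepsilon+|t_\varepsilon-s_\varepsilon|^2/\varepsilon\to 0$ and $\sup\Phi_\varepsilon\to\sup(\tilde u-v)\ge M/2$; combined with $u(\cdot,0)\le v(\cdot,0)$ this forces $t_\varepsilon,s_\varepsilon>0$ for small $\varepsilon$.

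Writing the subsolution inequality for $\tilde u$ at $(x_\varepsilon,t_\varepsilon)$ with the nonlocal term evaluated on $\tilde u$ itself (legitimate since $\mathcal{J}$ is bounded) and the supersolution inequality for $v$ at $(y_\varepsilon,s_\varepsilon)$, then subtracting and using $\mathcal{J}\tilde u=\mathcal{J} u$, one obtains
\begin{equation*}
\bigl(a(x_\varepsilon)-a(y_\varepsilon)\bigr)\cdot\frac{x_\varepsilon-y_\varepsilon}{\varepsilon}+\eta\le \mathcal{J} u(x_\varepsilon,t_\varepsilon)-\mathcal{J} v(y_\varepsilon,s_\varepsilon).
\end{equation*}
The drift term on the left is $O(|x_\varepsilon-y_\varepsilon|^2/\varepsilon)\to 0$ by Lipschitz continuity of $a$. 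For the nonlocal right-hand side the trick is to test the maximality of $\Phi_\varepsilon$ at the jumped pair $\bigl(x_\varepsilon+h(x_\varepsilon),y_\varepsilon+h(y_\varepsilon),t_\varepsilon,s_\varepsilon\bigr)$, which remains in $\Sigma^2$ for each of the three kinds of jump vector $h$ appearing in $\mathcal{J}$ (invariance of the simplex under mutation, already used in Lemma \ref{mg-probl}). This gives, for every such $h$,
\begin{equation*}
\tilde u(x_\varepsilon+h(x_\varepsilon),t_\varepsilon)-v(y_\varepsilon+h(y_\varepsilon),s_\varepsilon)-\bigl[\tilde u(x_\varepsilon,t_\varepsilon)-v(y_\varepsilon,s_\varepsilon)\bigr]\le\frac{|x_\varepsilon-y_\varepsilon+h(x_\varepsilon)-h(y_\varepsilon)|^2-|x_\varepsilon-y_\varepsilon|^2}{2\varepsilon}\le C\frac{|x_\varepsilon-y_\varepsilon|^2}{\varepsilon},
\end{equation*}
by Lipschitz continuity of $h$. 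Multiplying by $\lambda_{ij}f_i(x_\varepsilon)$, summing, and correcting for the $f_i(x_\varepsilon)-f_i(y_\varepsilon)$ mismatch via another Lipschitz estimate (which costs $O(|x_\varepsilon-y_\varepsilon|)\|v\|_\infty$) produces $\mathcal{J} u(x_\varepsilon,t_\varepsilon)-\mathcal{J} v(y_\varepsilon,s_\varepsilon)\to 0$. Sending $\varepsilon\to 0$ then yields $\eta\le 0$, contradicting the choice of $\eta$.

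The main obstacle I anticipate is the nonlocal term: unlike the purely local Crandall--Ishii setting, $\mathcal{J} u$ and $\mathcal{J} v$ are evaluated at different base points and involve jumps of $x$-dependent size $h(x_\varepsilon)$ vs.\ $h(y_\varepsilon)$. What saves the day is precisely the structural features of $\mathcal{J}$ derived in Section \ref{sec2}---finite sum, nonnegative coefficients, bounded, Lipschitz-in-$x$ jumps---so that the quadratic-penalty smallness $|x_\varepsilon-y_\varepsilon|^2/\varepsilon\to 0$ absorbs simultaneously the jump-size and the coefficient-evaluation mismatches. A secondary but necessary technical point is the invariance of $\Sigma$ under each jump, which guarantees that the values of $u$ and $v$ at the jumped points are well-defined; this follows from the same computation as in Lemma \ref{mg-probl}.
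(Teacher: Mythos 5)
Your overall strategy (contradiction, strictness via a zeroth-order perturbation, doubling of variables, and evaluating the nonlocal term on $u$ and $v$ themselves while testing maximality of $\Phi_\varepsilon$ at the jumped pairs) is sound, and your treatment of $\mathcal{J}$ is correct: since the operator is a finite sum with nonnegative, Lipschitz coefficients and Lipschitz, $\Sigma$-preserving jumps, the quadratic penalization does absorb both the jump-size mismatch $h(x_\varepsilon)-h(y_\varepsilon)$ and the coefficient mismatch $f_i(x_\varepsilon)-f_i(y_\varepsilon)$. That part matches what the paper sketches.

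The genuine gap is the spatial boundary of $\Sigma$. You maximize $\Phi_\varepsilon$ over the compact set $\Sigma\times\Sigma\times[0,T]^2$ and rule out $t_\varepsilon=0$ or $s_\varepsilon=0$, but you never address the possibility that $x_\varepsilon$ or $y_\varepsilon$ lies on $\partial\Sigma$ (a face $x_k=0$ or $\sum_k x_k=1$). Problem \eqref{K} carries no boundary condition, so the viscosity inequalities are only available at interior points; at a constrained maximum on $\partial\Sigma$ you cannot write the sub/supersolution inequality, and the argument stalls exactly there. This is not a removable technicality: the proposition is only true because the drift $a$ is tangent to $\partial\Sigma$ (Remark \ref{edges}), a hypothesis your proof never invokes --- if $a$ had a nonzero normal component the backward characteristics would exit $\Sigma$ and comparison from the initial datum alone would fail. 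The paper's proof is devoted almost entirely to this point: it perturbs by a barrier $\kappa(x,t)=1/x_1+1/(1-\sum_i x_i)+1/(T-t)$ blowing up on $\partial\Sigma$, which forces the approximate maximum $x(\delta)$ into the interior, and then controls the extra term $\delta\, a(x(\delta))\cdot D_x\kappa(x(\delta),t(\delta))$ as $\delta\to 0$ precisely by using that $a_1$ and $\sum_i a_i$ are Lipschitz and vanish on the corresponding faces, so that $a_1(x(\delta))/x_1(\delta)$ stays bounded. To repair your proof you would need to add this barrier (or an equivalent localization away from $\partial\Sigma$) before doubling variables, and use the tangency of $a$ to pass to the limit in $\delta$.
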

\begin{proof}
The proof consists in assuming that there exists $T>0$ so that 
\[ M = \max \{ e^{-t}\left(u (x,t)- v(x,t)\right)  \, : \, 0\le x\le 1 , \, 0\le t\le T \}>0 \]
and getting a contradiction.
Standard arguments exclude that any maximum point $\bar{x},\bar{t}$ may have $\bar t = 0$ either $\bar t \in(0,T]$ and $\bar x$ in the interior of $\Sigma$. 
We show that neither  $\bar x$ in the boundary of $\Sigma$ is allowed. 
To fix idea, suppose that $\bar x$ is a strict maximum point with $\bar x_1=0$, $\bar x_i>0$ as $i=2,\dots \d$  and $\sum\limits_{i=1}^{\d}\bar x_i=1$. Next, a barrier function
\[\kappa(x,t) = 1/{x_1}  +1/({1-\sum\limits_{i=1}^{\d}x_i})+1/(T-t) \] 
is introduced and $M$ is approximated by $M(\delta)$, the maximum value of
 \[ e^{-t}\left(u (x,t)- v(x,t)\right) - \delta \kappa(x,t) .\]
It is clear that there exists a maximum point $x(\delta), t(\delta)$ with $x(\delta)$ in the interior of $\Sigma$ and $0\le t(\delta)<T$. Moreover 
\[
M(\delta)\to M>0, \quad x(\delta)\to \bar x , \quad t(\delta)\to \bar{t}>0, \quad \delta\,\kappa(x(\delta),t(\delta)) \to 0\]
as  $\delta \to 0$.
Now the  perturbed functions $u_{\delta}(x,t)=  e^{-t} u (x,t) - \delta\,\kappa(x,t)$ and $v_{\delta}(x,t)=e^{-t}v(x,t)$ can be handled by the standard tool of doubling variables and using equation \eqref{K}. The step of passing to the limit as $\delta\to 0$ can be performed since
\begin{align*}
\left|\delta a(x(\delta)) \cdot D_x\kappa(x(\delta),t(\delta))\right|\le  & \left(\left|\dfrac{a_1(x(\delta))}{x_1(\delta)} \right|+ \left|\dfrac{\sum\limits_{i=1}^{\d}a_i(x(\delta))}{\sum\limits_{i=1}^{\d}(\bar x_i-x_i(\delta))}\right|\right) \delta \, \kappa(x(\delta),t(\delta)),
\end{align*}
where the quantities  $|{a_1(x(\delta))}/{x_1(\delta)}|$ and $|{\sum\limits_{i=1}^{\d}a_i(x(\delta))}/{\sum\limits_{i=1}^{\d}(\bar x_i-x_i(\delta))}|$ are bounded because $a$ is Lipschitz continuous and $a_1(\bar x)$, $\sum\limits_{i=1}^{\d}a_i(\bar x)$ are zero by Remark \ref{edges}.
\end{proof}

In our particular setting, we have decided to emphasize the transport component by following the method of characteristics. Nevertheless, the solution defined and produced in the previous section coincides with the viscosity solution.
\begin{proposition} Mild and viscosity solution of \eqref{K} coincide.
\end{proposition}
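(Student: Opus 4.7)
The plan is to prove that the mild solution $u$ constructed in Proposition \ref{wpmild} is also a viscosity solution of \eqref{K}; Proposition \ref{comparison}, applied in both directions, will then force any viscosity solution sharing the initial datum $u_k(x,0)=x_k$ to coincide with $u$.

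The key analytical observation is that, although $u$ itself is only continuous, its restriction to any characteristic line is of class $C^1$. Indeed, the auxiliary function $v_k(x,s,t) = u_k(Y(x,s-t),s)$ satisfies the integral identity \eqref{mildipde}, whose integrand $\Ical v_k$ is jointly continuous: the jump targets $x+\delta_{ij}(x,s-t)$ remain in $\Sigma$ (same invariance used throughout Section \ref{sec2}, since $Y$ preserves $\Sigma$ and the perturbations $\gamma_{ij}x_i(e_j-e_i)$ do so too), and $v_k$ itself is continuous by Proposition \ref{wpmild}. Therefore $s\mapsto v_k(x,s,t)$ is $C^1$ with derivative $\Ical v_k(x,s,t)$. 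Specialising at $s=t$ gives, for every $(x_0,t_0)\in\Sigma\times(0,+\infty)$, that the map $\sigma\mapsto u_k(Y(x_0,\sigma-t_0),\sigma)$ is $C^1$ near $\sigma=t_0$ with
\[
\frac{d}{d\sigma}\bigg|_{\sigma=t_0} u_k(Y(x_0,\sigma-t_0),\sigma) \,=\, \jcal u_k(x_0,t_0).
\]

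With this at hand, I would verify the viscosity subsolution inequality by a composition argument. Let $\phi\in C^1$ touch $u_k$ from above at $(x_0,t_0)$ with $t_0>0$. By Remark \ref{edges}, the characteristic curve $\sigma\mapsto Y(x_0,\sigma-t_0)$ lies in $\Sigma$ on a neighborhood of $\sigma=t_0$, so the scalar function $g(\sigma):=(u_k-\phi)(Y(x_0,\sigma-t_0),\sigma)$ has a local maximum at $\sigma=t_0$. Both summands are $C^1$ in $\sigma$ (by the previous step, and by smoothness of $\phi$ and of the flow $Y$), hence $g'(t_0)=0$, which yields
\[
\partial_t\phi(x_0,t_0) + a(x_0)\cdot\nabla\phi(x_0,t_0) - \jcal u_k(x_0,t_0) \,=\, 0,
\]
and in particular the subsolution inequality holds (with equality). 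The supersolution inequality is obtained by the identical argument applied to a test function touching from below. Since the initial condition is built into the mild formula, $u$ is a viscosity solution, and Proposition \ref{comparison} forces equality with any other viscosity solution.

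The main technical subtlety I foresee is the behavior at boundary points $x_0\in\partial\Sigma$, where one must ensure that the characteristic remains in $\Sigma$ and that $\jcal u_k$ is meaningfully evaluated along it. This is precisely guaranteed by the tangency of $a$ at $\partial\Sigma$ (Remark \ref{edges}) together with the invariance of $\Sigma$ under the jump perturbations, which are the same ingredients that made Proposition \ref{comparison} work without extra boundary conditions.
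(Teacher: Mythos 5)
Your proposal is correct and follows essentially the same route as the paper: both pass to the function $v_k(x,s,t)=u_k(Y(x,s-t),s)$, use the integral identity \eqref{mildipde} to get $C^1$ regularity in $s$ with $\partial_s v_k=\Ical v_k$, observe that composing $u_k-\phi$ with the characteristic through $(x_0,t_0)$ produces an interior maximum in $\sigma$, and read off $\partial_t\phi+a\cdot\nabla\phi=\jcal u_k\le\jcal\phi$ before invoking comparison. The only cosmetic difference is that you restrict to the one-dimensional characteristic curve where the paper keeps $v_k-\psi$ as a function of $(x,s)$; the substance is identical.
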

\begin{proof}
As well posedness holds in both framework, it suffices to check that the mild solution  is a solution in viscosity sense. We only prove the subsolution part, because the supersolution is identical.
Let $k=1\dots\d$, and $\phi$ be a smooth scalar function such that $u_k-\phi$ has a global maximum at  $(x,t)\in \Sigma\times(0,+\infty)$, with $u_k(x,t)=\phi(x,t)$.
First, we notice that $\jcal u_k(x,t) \le \jcal\phi(x,t)$.
Next, we set $\psi(x,s,t)= \phi(Y(x,s-t) ,s)$: it is clear that also $v_k-\psi$ has a global maximum point at $(x,t,t)$, and therefore $\Ical v_k(x,t,t) \le \Ical \psi(x,t,t)$. Hence it is easily seen that
\begin{align*}
\partial_t\phi(x,t) + {a}(x) \cdot D_x \phi(x,t)=\partial_s \psi({x},{t},{t}) = \partial_s v_k(x,t,t) \\ = \Ical v_k(x,t,t)= \jcal u_k(x,t) \le \jcal \phi(x,t).
\end{align*}
\end{proof}
Comparison techniques provides further  information about the regularity of $u$ w.r.t.~$ x$ (see, for instance, \cite{Ama03}). 
\begin{proposition}\label{lipschitz}
The solution to \eqref{K} is Lipschitz continuous w.r.t.~$x$, for every fixed $t>0$.
\end{proposition}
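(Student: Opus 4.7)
The plan is to combine the mild formulation \eqref{uformula} with the Lipschitz regularity of the characteristic flow and a Gronwall argument on the Lipschitz seminorm of $u_k(\cdot,t)$. Fix $T>0$ and set
\[ L_k(t) = \sup_{x\neq y \in \Sigma} \frac{|u_k(x,t)-u_k(y,t)|}{|x-y|}. \]
The goal is to show that $L_k(t)$ is finite uniformly on $[0,T]$.

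I would begin by collecting the Lipschitz ingredients. Since $a$ is Lipschitz on $\Sigma$ (because $f$ is so by hypothesis and $a$ is a polynomial expression in $x,f$), the flow $x\mapsto Y(x,s)$ of \eqref{char} is Lipschitz in $x$ with constant bounded by $e^{C|s|}$, by Gronwall applied to the variational equation. The affine jump maps $z\mapsto z+\gamma_{ij}z_i(e_j-e_i)$, $z\mapsto z-\gamma_{i0}z_ie_i$ and $z\mapsto z+\gamma_{0j}(1-\sum_i z_i)e_j$ that enter the definition of $\mathcal{J}$ are Lipschitz with explicit constants depending on the $\gamma$'s. The fitness $f$ is Lipschitz and bounded, and $u_k$ is bounded in $[0,1]$ by the stochastic representation together with Lemma \ref{mg-probl}. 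Consequently, each summand of $\mathcal{J}u_k(z,s)$ has the form $\lambda f_i(z)\bigl[u_k(z+\delta(z),s)-u_k(z,s)\bigr]$ with $\delta$ affine in $z$, so that
\[ |\mathcal{J}u_k(z_1,s)-\mathcal{J}u_k(z_2,s)| \le \bigl(A + B\,L_k(s)\bigr)|z_1-z_2|, \]
for constants $A,B$ depending only on the data.

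Plugging this into \eqref{uformula}, for any $x,y\in\Sigma$,
\begin{align*}
|u_k(x,t)-u_k(y,t)| \le &\; \bigl|Y_k(x,-t)-Y_k(y,-t)\bigr| \\
 &+ \int_0^t \bigl|\mathcal{J}u_k(Y(x,s-t),s)-\mathcal{J}u_k(Y(y,s-t),s)\bigr|\, ds.
\end{align*}
The first term is bounded by $e^{CT}|x-y|$. Composing the estimate on $\mathcal{J}u_k$ with the Lipschitz flow $Y(\cdot,s-t)$ and taking the supremum over $x\neq y$, one arrives at the Gronwall-ready inequality
\[ L_k(t) \le e^{CT} + \int_0^t e^{C(t-s)}\bigl(A+B\,L_k(s)\bigr)\, ds, \]
from which $L_k(t)\le \Lambda(T)$ on $[0,T]$ follows at once.

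To make the Gronwall step legitimate, I would check a priori finiteness of $L_k(s)$ on $[0,T]$ by re-running the contraction argument of Proposition \ref{wpmild} inside the closed subspace of $B_r^k$ consisting of functions that are $L$-Lipschitz in $x$ for a sufficiently large $L$. The initial guess $v_o(x,s,t)=Y_k(x,-t)$ is Lipschitz in $x$ uniformly in $t\in[0,T]$, and a short-time version of the estimate above shows that $\mathcal{T}$ preserves this subspace; by uniqueness of the fixed point, the mild solution produced in Proposition \ref{wpmild} is Lipschitz on each short time slab, and the Gronwall inequality displayed above controls how the Lipschitz constant propagates as the slabs are patched. The main technical obstacle is precisely tracking the constants $A,B$ through the nested composition $x\mapsto Y(x,s-t)\mapsto Y(x,s-t)+\gamma_{ij}Y_i(x,s-t)(e_j-e_i)\mapsto Y(\cdot,-(s-t))$ defining the $\delta_{ij}$; this is bookkeeping using only the Lipschitz bounds on $Y$, $f$ and $u_k$ collected above.
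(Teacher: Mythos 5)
Your proposal is essentially correct, but it takes a genuinely different route from the paper, which in fact supplies no proof at all for this proposition: it only remarks that the result follows from ``comparison techniques'' and points to \cite{Ama03}. The intended argument there stays entirely inside the viscosity framework already set up in Proposition \ref{comparison}: one tests the comparison principle (via doubling of variables) against a candidate modulus of the form $L e^{Kt}|x-y|$, choosing $K$ large in terms of the Lipschitz constants of $a$, $f$ and the jump maps, so that $u(y,t)+Le^{Kt}|x-y|$ is a supersolution whenever $u$ is a solution; Lipschitz continuity of $u(\cdot,t)$ then drops out in a few lines and extends verbatim to the general class of Remark \ref{generalcase}. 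Your route instead works with the mild formulation \eqref{uformula}: a Gronwall inequality for the Lipschitz seminorm $L_k(t)$, anchored by re-running the contraction of Proposition \ref{wpmild} in the invariant subspace of uniformly Lipschitz functions. This is more elementary (no viscosity machinery), produces an explicit exponential-in-time bound on the Lipschitz constant, and is tied directly to the solution actually constructed in Proposition \ref{wpmild}; the comparison route is shorter given that Proposition \ref{comparison} is already in place and is robust for solutions not known a priori to be mild. One point you should make explicit when writing it up: the a priori finiteness of $L_k$ on all of $[0,T]$ requires the slabs on which the Lipschitz-preserving contraction runs to have length bounded below independently of the (possibly growing) Lipschitz constant of the data at the start of each slab. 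This does hold, because the constants $A$ and $B$ in your estimate for $\mathcal{J}u_k$ depend only on $f$, the $\gamma$'s, $\|u_k\|_\infty$ and $T$ (through the flow bounds on $[0,T]$), not on $L_k$ itself, so a uniform smallness condition of the form $S\,B'<1/2$ suffices; with that observation the Gronwall step is legitimate and the argument closes.
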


\begin{remark}\label{generalcase}
It is not hard to extend all results in this section to a more general class of problems that can be written as \eqref{K}, 
where ${\mathcal J}$ is a first order non local operator in the form
\[ {\mathcal J} \phi (x) = \int f(x,\zeta) \left[ \phi\left(x+ {\bf\gamma} (x,\zeta)\right)- \phi(x) \right] d\nu(\zeta),\]
provided that
\begin{enumerate}[i)]
\item $\Sigma$ is a  compact set, whose boundary is globally Lipschitz, and consists in the union of smooth surfaces which have exterior normal vector.
\item ${a}$ is a Lipschitz continuous vector field defined on $\Sigma$. At every $x$ in the boundary of $\Sigma$, and for every exterior normal vector $n$, we have ${a}\cdot {\bf n} =0$.
\item $\nu$ is a finite measure.
\item ${\bf \gamma}$ is a (vector valued) function, Lipschitz continuous w.r.t.~$x\in \Sigma$ (uniformly w.r.t.~$\zeta$) with $x+{\bf \gamma}(x,\zeta) \in \Sigma$ for all $x\in \Sigma$ and $\nu$-almost any $\zeta$.
\item $f$ is a (real valued) function, Lipschitz continuous w.r.t.~$x\in \Sigma$ (uniformly w.r.t.~$\zeta$) with $f(x,\zeta) \ge 0$ for all $x\in \Sigma$ and $\nu$-almost any $\zeta$.
\item $u_o$ is a continuous (vector valued) function.
\end{enumerate}
\end{remark}

\section{Two quasispecies}\label{sec4}

With the aim of expounding the behavior of the expected frequencies, we deal with the simplest case: two species (i.e. $\d=1$) and constant fitness (i.e. $f_{i}(x)=f_i$, as $i=0,1$).  We also introduce the selection rate 
\[ \s=f_0-f_1 .\]
 To fix ideas we take $\s>0$, so that  $x_0=1$, $x_1=0$ is the only asymptotically stable rest point for the replicator equation \eqref{replicator}.

After the dimensional reduction $x=x_1$,  $x_0=1-x$, the (scalar) Kolmogorov equation \eqref{K} for \[ u(x,t)=\ecal\left(x(t)\, \big| \, x(0)=x\right)\] reads
\begin{align}\label{K1} 
\left\{\begin{array}{ll}
 \partial_tu +  \s (1-x) x \,  \partial_x u = \lambda_0f_0 \jcal_0u +\lambda_1f_1 \jcal_1u , & 0\le x\le 1 , \, t>0, \\[.15cm]
u(x,0)=x & 0\le x\le 1 .,\end{array}\right.
\end{align}
where
\begin{align*}
 \jcal_0 u(x,t)= u(x + \gamma_0 (1-x),t)-u(x,t), \quad
 \jcal_1 u(x,t)=u(x-\gamma_{1} x,t)-u(x,t).
\end{align*}
Here and henceforth we have shortened notations by writing $\gamma_0=\gamma_{01}$, $\gamma_1=\gamma_{10}$,  $\lambda_0=\lambda_{01}$, $\lambda_1=\lambda_{10}$.

Our main concern is to compare the dynamics  with rare mutation and the standard  quasispecies dynamics.
In this particular setting (and after the dimensional reduction) the quasispecies equation \eqref{quasispecies1} reads
\begin{equation}\label{quasispecies2}\left\{\begin{array}{ll}
\dot x = - \s x(1-x) + m_0 f_0(1-x) - m_1 f_1 x \quad  & t>0, \\
x(0)=x.& \end{array}\right.\end{equation}
Here $m_0$  stands for the mutation parameter from species 0 to species 1, and viceversa for $m_1$. With Remark \ref{confrontoquasispecie} in mind, we take
\[ m_i=\lambda_i\gamma_i, \; \text{ as } i=0,1 .\]
Let $X(x,t)$ stand for the flux associated to \eqref{quasispecies2}, i.e.~the solution to the o.d.e.~with initial condition $x$,  computed at $t$.
It solves the homogeneous transport equation
\begin{align}\label{qs1} 
\left\{\begin{array}{ll}
 \partial_tX +  \left(\s x(1\!-\!x) - m_0 f_0(1\!-\!x) + m_1 f_1 x\right) \partial_x X = 0, \quad & 0\le x\le 1 , \, t>0, \\[.15cm]
X(x,0)=x , & 0\le x\le 1 .\end{array}\right.
\end{align}

Let us recollect some basic facts about \eqref{K1}.
In this simple scalar setting,  $u$ is a classical smooth solution.
\begin{proposition}\label{classicalsolution}
The Cauchy problem \eqref{K1} admits a classical solution $u\in \ccal^{\infty}([0,1]\times[0,\infty))$.
\end{proposition}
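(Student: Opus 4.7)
The plan is to upgrade the mild solution of Proposition \ref{wpmild} to a classical smooth solution by Picard iteration, exploiting the fact that in this scalar constant-fitness setting every ingredient of \eqref{K1} is $\ccal^\infty$: the transport coefficient $a(x)=\s(1-x)x$ is polynomial, the shifts $x\mapsto x+\gamma_0(1-x)$ and $x\mapsto(1-\gamma_1)x$ are affine and leave $[0,1]$ invariant, and the characteristic flow of $\dot y=\s(1-y)y$ has the explicit expression
\[ Y(x,t)=\frac{x e^{\s t}}{1-x+x e^{\s t}}, \]
which is $\ccal^\infty$ on $[0,1]\times\R$ with the same invariance.

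I would then define $u^{(0)}(x,t)=x$ and iterate
\[ u^{(n+1)}(x,t)=Y(x,-t)+\int_0^t\jcal u^{(n)}(Y(x,s-t),s)\,ds, \]
where $\jcal$ is the scalar version of the non-local operator in \eqref{K1}. By Proposition \ref{wpmild} applied to the scalar case, $u^{(n)}\to u$ uniformly on $[0,1]\times[0,T]$ for every $T>0$, and a straightforward induction together with differentiation under the integral sign shows that each $u^{(n)}\in\ccal^\infty([0,1]\times[0,T])$, since the integrand is a smooth function of all its arguments.

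To pass smoothness to the limit, I would establish two estimates on $[0,1]\times[0,T]$ by induction on $|\alpha|$: (i) the family $\{\|\partial^\alpha u^{(n)}\|_\infty\}_n$ is bounded, and (ii) the differences satisfy $\|\partial^\alpha(u^{(n+1)}-u^{(n)})\|_\infty\le C_\alpha^n T^n/n!$, so that the telescoping series $\sum_n\partial^\alpha(u^{(n+1)}-u^{(n)})$ converges uniformly. Both estimates follow from Gronwall-type integral inequalities obtained by differentiating the recurrence: each derivative produces, via the chain rule applied to $u^{(n)}(Y(x,s-t)+\mathrm{shift},s)$, an integral expression linear in $\partial^\alpha u^{(n)}$ with coefficients bounded by the $\ccal^{|\alpha|}$ norms of $Y$, $a$ and the shifts on $[0,1]\times[-T,T]$, plus lower-order remainders already controlled by the induction hypothesis.

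The classical termwise-differentiation theorem then yields $u\in\ccal^\infty([0,1]\times[0,T])$ for every $T$, hence on $[0,1]\times[0,\infty)$, and a direct differentiation of \eqref{uformula} certifies that $u$ satisfies \eqref{K1} pointwise. The main obstacle is the combinatorial bookkeeping of the Leibniz and chain rules for mixed derivatives $\partial_x^j\partial_t^k$ passing through the composition with $Y$ and the shifts; since all these smooth factors have uniformly bounded derivatives of every order on the relevant compact sets, the implicit constants at each stage are finite and the factorial decay in (ii) absorbs any polynomial-in-$n$ growth introduced by the differentiation.
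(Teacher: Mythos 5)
Your argument is correct, but it is not the route the paper takes. The paper's proof is a one-line bootstrap on the equation itself: differentiate \eqref{K1} with respect to $x$ iteratively, observe that the resulting problem for $p=\partial_x u$ --- namely \eqref{K1_x} --- inherits exactly the same structure (Lipschitz transport field vanishing at the endpoints, bounded zero-order nonlocal term, smooth datum), so the well-posedness theory of Section 3 applies verbatim at each order. You instead bootstrap on the Picard iterates: each iterate is smooth, you get $n$-uniform bounds and factorially decaying difference estimates for every derivative, and you pass to the limit by termwise differentiation. Your route is longer but more self-contained, and it sidesteps a point the paper leaves implicit, namely the identification of the solution of the differentiated problem with the actual derivative of $u$ (which otherwise requires a difference-quotient or uniqueness argument); the paper's route buys brevity by reusing Proposition \ref{wpmild} at each stage. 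Two bookkeeping caveats in your scheme, neither fatal: differentiating $\int_0^t(\cdots)\,ds$ in $t$ produces boundary terms of the form $\jcal\bigl(u^{(n)}-u^{(n-1)}\bigr)(x,t)$ whose factorial decay is shifted by one order relative to your stated $C_\alpha^nT^n/n!$ (still summable, so uniform convergence of the telescoping series survives), and the logistic flow $Y(x,t)=xe^{\s t}/\bigl(1+x(e^{\s t}-1)\bigr)$ must be checked smooth and $[0,1]$-preserving for \emph{negative} times as well, since the mild formula evaluates $Y$ at $s-t\le 0$; this holds because the denominator is bounded below by $e^{\s t}>0$.
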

We do not report in details the proof of this result because it is completely standard: it relies in deriving w.r.t.~$x$ iteratively the equation and the initial datum in \eqref{K1} and noticing that the obtained problem inherits the same structure and regularity.
We rather go into details and obtain some more estimates  concerning first and second order derivative w.r.t.~$x$.

\begin{lemma}\label{u_xest}
For every $t>0$, the solution to \eqref{K1} satisfies \[ 0\le\partial_xu(x,t)\le e^{(\s-m_0f_0-m_1f_1)t}\] for all $x\in[0,1]$.
\end{lemma}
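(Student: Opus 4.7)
The plan is to control $v := \partial_x u$, which by Proposition \ref{classicalsolution} is smooth, via a maximum-principle argument on the integro-differential equation that it satisfies. Differentiating \eqref{K1} in $x$, and using the chain rule $\partial_x[u(y(x),t)] = y'(x)\,v(y(x),t)$ with $y_0 := x+\gamma_0(1-x)$ (so $y_0'=1-\gamma_0$) and $y_1 := (1-\gamma_1)x$ (so $y_1'=1-\gamma_1$), one obtains
\begin{equation*}
\partial_tv+\s(1-x)x\,\partial_xv = [\s(2x-1)-\lambda_0f_0-\lambda_1f_1]\,v + \lambda_0f_0(1-\gamma_0)\,v(y_0,t) + \lambda_1f_1(1-\gamma_1)\,v(y_1,t),
\end{equation*}
with $v(x,0)=1$.

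For the upper bound, set $\mu := \s-m_0f_0-m_1f_1$ and $\tilde w(x,t) := e^{-\mu t}v(x,t)$. Recalling $m_i=\lambda_i\gamma_i$, a direct substitution transforms the coefficient of $\tilde w$ into $-c(x)$, yielding
\begin{equation*}
\partial_t\tilde w+\s(1-x)x\,\partial_x\tilde w+c(x)\,\tilde w = \lambda_0f_0(1-\gamma_0)\,\tilde w(y_0,t)+\lambda_1f_1(1-\gamma_1)\,\tilde w(y_1,t),
\end{equation*}
with $\tilde w(x,0)=1$ and $c(x):=2\s(1-x)+\lambda_0f_0(1-\gamma_0)+\lambda_1f_1(1-\gamma_1)\ge 0$. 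If $\sup\tilde w>1$, continuity on $[0,1]\times[0,T]$ yields a maximizer $(x^*,t^*)$ with $t^*>0$ and $M:=\tilde w(x^*,t^*)>1$. The transport term vanishes at $(x^*,t^*)$ (either $\partial_x\tilde w=0$ at an interior $x^*$, or $\s(1-x^*)x^*=0$ at $x^*\in\{0,1\}$), the nonlocal contributions are at most their coefficients times $M$, and $\partial_t\tilde w(x^*,t^*)\ge 0$. Plugging in the equation leaves
\begin{equation*}
0\le\partial_t\tilde w(x^*,t^*)\le[-c(x^*)+\lambda_0f_0(1-\gamma_0)+\lambda_1f_1(1-\gamma_1)]M = -2\s(1-x^*)M,
\end{equation*}
which forces $x^*=1$. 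In that case equality can hold only if $\tilde w(y_1,t^*)=\tilde w(1-\gamma_1,t^*)=M$ as well; since $\gamma_1>0$, the point $1-\gamma_1$ lies in $[0,1)$, where the same inequality gives $-2\s\gamma_1 M<0$, a strict contradiction. Hence $\tilde w\le 1$, i.e., $v\le e^{\mu t}$.

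The lower bound $v\ge 0$ follows by a parallel contradiction argument. Set $\tau:=\inf\{t>0:\min_xv(\cdot,t)<0\}$ and suppose $\tau<+\infty$. By continuity $v(\cdot,\tau)\ge 0$ and equals $0$ at some minimizer $x^*$; at this point the transport term again vanishes (interior or boundary as above) and the nonlocal terms are nonnegative, so evaluating the equation at $(x^*,\tau)$ gives $\partial_tv(x^*,\tau)\ge 0$, contradicting the fact that $v(x^*,\cdot)$ must cross zero from above. The main subtlety throughout is the handling of maximizers/minimizers at the characteristic boundary points $x\in\{0,1\}$, where $\s(1-x)x$ vanishes so no Hopf-type information is available; the chain argument through the shift $y_1=(1-\gamma_1)x$ is what propagates the contradiction from $x^*=1$ back to an interior point and makes the exponent $\mu$ sharp.
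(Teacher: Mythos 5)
Your proposal follows the same route as the paper's proof: differentiate \eqref{K1} in $x$, obtain the linear integro-differential equation for $p=\partial_x u$ (your $v$-equation is exactly the paper's \eqref{K1_x} with the nonlocal part written out pointwise instead of through $\jcal_0,\jcal_1$), and compare with the explicit barriers $0$ and $e^{(\s-m_0f_0-m_1f_1)t}$. The paper stops at ``these are a sub- and a supersolution, conclude by comparison,'' leaning on its Proposition \ref{comparison}; you instead re-derive the comparison by a hand-rolled maximum principle, and that is where your write-up has two soft spots. (i) Upper bound: at $x^*=1$ your chain of inequalities reduces to $0\le \lambda_1 f_1(1-\gamma_1)\left(\tilde w(1-\gamma_1,t^*)-M\right)\le 0$, and the conclusion $\tilde w(1-\gamma_1,t^*)=M$ needs $\lambda_1 f_1(1-\gamma_1)>0$. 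The degenerate cases $f_1=0$ and $\gamma_1=1$ are admissible in this paper ($f_k\ge 0$, $\gamma_{ik}\in(0,1]$), and there the chain does not propagate; however, in those cases the equation at $x=1$ collapses to $\partial_t\tilde w(1,t)=0$, so $\tilde w(1,t)\equiv 1<M$ and the contradiction is immediate --- this case should be stated. (ii) Lower bound: the first-crossing-time argument is not conclusive as written. At $(x^*,\tau)$ you only obtain $\partial_t v(x^*,\tau)\ge 0$, which does not prevent $\min_x v(\cdot,t)$ from dipping below zero immediately after $\tau$ at nearby values of $x$; ``$v(x^*,\cdot)$ must cross zero from above'' is not a fact, since the minimizer moves with $t$. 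The standard repair is to run the argument for $v_\ep=v+\ep e^{Kt}$ with $K$ large enough that a first touching of zero yields a strict inequality, then let $\ep\to 0$; alternatively, nonnegativity falls out of the mild (Duhamel-along-characteristics) representation of \eqref{K1_x}, whose Picard iterates preserve positivity because all nonlocal coefficients are nonnegative. With these two repairs your argument is complete and coincides in substance with the paper's.
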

\begin{proof}
Let $p=\partial_xu$: deriving \eqref{K1} w.r.t.~$x$ gives
\begin{align}\label{K1_x} 
\left\{\begin{array}{l}
 \begin{array}{r}\partial_tp +  \s (1-x) x \,  \partial_x p +\left(\s(1-2x) + m_0f_0 +m_1f_1\right)p \\[.15cm]
= \lambda_0f_0(1-\gamma_0) \jcal_0p +\lambda_1f_1 (1-\gamma_1)\jcal_1p , \end{array}\\[.15cm]
p(x,0)=1 .\end{array}\right.
\end{align}
It is clear that $p=0$ and $p=e^{(\s-m_0f_0-m_1f_1)t}$ are, respectively, a subsolution and a supersolution. So the thesis follows by comparison.
\end{proof}

A similar estimate holds also in the general case treated in previous section, and can be proved for viscosity solutions. 
It is worst mentioning that, in particular, $u$ is monotone increasing w.r.t.~$x$, for every fixed $t>0$.
It is also convex, as shown by next lemma.

\begin{lemma}\label{convex}
For every $t>0$, the solution to \eqref{K1} is convex w.r.t.~$x$. Moreover there exist two constants $\c>0$ and $\mu\in\R$ such that 
\[ 0\le\partial^2_{xx}u(x,t)\le \c e^{\mu t}\] for all $x\in[0,1]$.
\end{lemma}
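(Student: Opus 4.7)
My plan is to mimic the strategy of Lemma~\ref{u_xest}: write down the integro--differential equation satisfied by $q=\partial^2_{xx}u$ by differentiating \eqref{K1_x} once more in $x$, and then run two comparison arguments, one for the sign and one for the exponential bound.

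\medskip

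\textbf{Step 1 (PDE for $q$).} Differentiating $\jcal_0 p(x,t)=p(x+\gamma_0(1-x),t)-p(x,t)$ and $\jcal_1 p(x,t)=p((1-\gamma_1)x,t)-p(x,t)$ in $x$ gives
\[
\partial_x\jcal_0 p=(1-\gamma_0)\jcal_0 q-\gamma_0 q,\qquad
\partial_x\jcal_1 p=(1-\gamma_1)\jcal_1 q-\gamma_1 q.
\]
Differentiating the equation \eqref{K1_x} for $p$ in $x$ and collecting terms, I obtain
\begin{align*}
\partial_t q +\s(1\!-\!x)x\,\partial_x q
+\bigl[2\s(1-2x)+m_0f_0+m_1f_1+\lambda_0 f_0\gamma_0(1-\gamma_0)+\lambda_1 f_1\gamma_1(1-\gamma_1)\bigr]q\\
=\lambda_0 f_0(1-\gamma_0)\jcal_0 q+\lambda_1 f_1(1-\gamma_1)\jcal_1 q+2\s\,p,
\end{align*}
together with $q(x,0)=\partial^2_{xx}x=0$. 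The smoothness of $q$ is free from Proposition~\ref{classicalsolution}.

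\medskip

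\textbf{Step 2 (convexity).} The source $2\s\,p$ is nonnegative by Lemma~\ref{u_xest} (and by the standing assumption $\s>0$), the zeroth--order coefficient on the left--hand side is bounded and continuous, and the jump operators on the right are of the form ``value elsewhere minus value here'' with nonnegative weights $\lambda_0 f_0(1-\gamma_0)$, $\lambda_1 f_1(1-\gamma_1)$. Hence the subsolution $\underline q\equiv 0$ satisfies the equation with a smaller source ($0\le 2\s p$) and the same initial datum, so comparison (analogous to the one used for Lemma~\ref{u_xest}) yields $q\ge 0$, which is the desired convexity.

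\medskip

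\textbf{Step 3 (exponential upper bound).} I look for a spatially constant supersolution of the form $\bar q(t)=\c\,e^{\mu t}$. Since $\jcal_0 \bar q=\jcal_1 \bar q=0$, plugging $\bar q$ into the equation for $q$ reduces to the pointwise inequality
\[
\c\,e^{\mu t}\Bigl(\mu+2\s(1-2x)+m_0f_0+m_1f_1+\lambda_0 f_0\gamma_0(1-\gamma_0)+\lambda_1 f_1\gamma_1(1-\gamma_1)\Bigr)\ge 2\s\,p(x,t).
\]
Using $p(x,t)\le e^{(\s-m_0f_0-m_1f_1)t}$ from Lemma~\ref{u_xest} and the trivial bound $2\s(1-2x)\ge-2\s$ on $[0,1]$, it suffices to pick $\mu$ so large that $\mu-2\s+m_0f_0+m_1f_1+\lambda_0f_0\gamma_0(1-\gamma_0)+\lambda_1f_1\gamma_1(1-\gamma_1)\ge 2\s$ and $\mu\ge\s-m_0f_0-m_1f_1$, and then choose $\c\ge 1$ large enough (say $\c=1$ suffices after fixing $\mu$). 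Then $\bar q$ is a supersolution with $\bar q(x,0)=\c\ge 0=q(x,0)$, and comparison gives $q\le\c e^{\mu t}$.

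\medskip

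\textbf{Main obstacle.} The work is essentially bookkeeping: deriving the equation for $q$ correctly (in particular tracking the $-\gamma_i q$ terms produced by differentiating the nonlocal operators) and identifying an $\mu$ that absorbs both the negative coefficient $2\s(1-2x)$ near $x=1$ and the already--exponential growth of the inhomogeneity $2\s p$. Neither step is deep, but both must be carried out carefully so that the resulting comparison principle (the same one underlying Lemma~\ref{u_xest}) can be applied unchanged.
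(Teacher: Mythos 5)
Your proposal is correct and follows essentially the same route as the paper: differentiate \eqref{K1_x} once more, use the nonnegativity of the source $2\s\,\partial_xu$ and comparison to get $q\ge 0$, and then compare with a spatially constant exponential supersolution (the paper's choice is $\overline q=\tfrac{2\s}{\ep}e^{(2\s-m_0f_0-m_1f_1+\ep)t}$, which pins down a near-optimal $\mu$, while yours just takes $\mu$ large — either suffices for the statement). The only slip is notational: the nonlocal terms in your equation for $q$ should carry the coefficients $\lambda_0f_0(1-\gamma_0)^2$ and $\lambda_1f_1(1-\gamma_1)^2$ (as your own formula $\partial_x\jcal_0 p=(1-\gamma_0)\jcal_0 q-\gamma_0 q$ multiplied by $\lambda_0f_0(1-\gamma_0)$ shows), but since these coefficients are nonnegative either way and $\jcal_i$ annihilates constants, the argument is unaffected.
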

\begin{proof}
Deriving \eqref{K1} twice w.r.t.~$x$ gives that $q=\partial^2_{xx}u$ solves 
\begin{align*}
 \partial_tq +  \s x (1-x) \partial_x q +(4\s(1-x)+\alpha) q =  2\s \partial_x u + \\[.15cm]
 \lambda_0f_0(1-\gamma_0)^2 \jcal_0q +\lambda_1f_1 (1-\gamma_1)^2\jcal_1q  ,
\end{align*}
with $\alpha=-2\s+(2-\gamma_0)m_0f_0+(2-\gamma_1)m_1f_1$. As $\partial_xu\ge 0$, the function $\partial^2_{xx}u$ is a supersolution to the homogeneous Cauchy problem
\begin{align*}
\left\{\begin{array}{l}
 \partial_tq +  \s x (1-x) \partial_x q +  (4\s(1-x)+\alpha) q = \lambda_0f_0(1-\gamma_0)^2 \jcal_0q +\lambda_1f_1 (1-\gamma_1)^2\jcal_1q  , \\[.15cm]
q(x,0)=0 ,\end{array}\right.
\end{align*}
and therefore $\partial^2_{xx}u\ge 0$.
On the other hand, as $\partial_xu \le e^{(\s-m_0f_0-m_1f_1) t}$,  the function $\partial^2_{xx}u$ is a subsolution to
\begin{align}\label{inhp}
\left\{\begin{array}{l}
 \begin{array}{r} \partial_tq +  \s x (1-x) \partial_x q +(4\s(1-x)+\alpha) q = 2\s e^{(\s-m_0f_0-m_1f_1) t} +\\[.15cm]
\lambda_0f_0(1-\gamma_0)^2 \jcal_0q +\lambda_1f_1 (1-\gamma_1)^2\jcal_1q  ,
\end{array}\\[.15cm]
q(x,0)=0 .\end{array}\right.\end{align}
Eventually, also  the estimate from above of $\partial ^2_{xx}u$ follows by comparison, after having checked that the function $\overline q (x,t)= \dfrac{2\s}{\ep} e^{(2\s-m_0f_0-m_1f_1+\ep) t}$ is a supersolution to \eqref{inhp}, for every $\ep>0$,
\end{proof}

It follows that the expected value of the density with rare mutations  is greater or equal than the deterministic one, i.e.~rare mutations increase the survival opportunities of the low-fitness species.

\begin{proposition}\label{below}
Let $u$ and $X$ be, respectively, the solution to \eqref{K1} and \eqref{qs1}. Then $1\ge u(x,t)\ge X(x,t)$  for all $0\le x\le 1$ and $t\ge 0$.
\end{proposition}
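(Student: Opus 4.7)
The plan is to treat the two inequalities separately, both through the comparison principle of Proposition \ref{comparison}. The upper bound $u\le 1$ is immediate: the constant function $w\equiv 1$ is trivially a supersolution of \eqref{K1} (both $\jcal_0$ and $\jcal_1$ annihilate constants and the transport term vanishes) and dominates the initial datum $u(x,0)=x$, so comparison yields $u(x,t)\le 1$ at once.

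For the lower bound the plan is to show that $X$ itself is a classical (hence viscosity) subsolution of \eqref{K1}; then, since $X(x,0)=x=u(x,0)$, comparison will give $u\ge X$. Rearranging the transport equation \eqref{qs1}, one can write
\[ \partial_t X + \s x(1-x)\partial_x X = \bigl(m_0f_0(1-x) - m_1f_1 x\bigr)\partial_x X, \]
so what remains to check is the pointwise estimate
\[ \bigl(m_0f_0(1-x) - m_1f_1 x\bigr)\partial_x X \le \lambda_0f_0\jcal_0 X + \lambda_1f_1\jcal_1 X. \]
Recalling that $\jcal_0 X = X(x+\gamma_0(1-x),t)-X(x,t)$, $\jcal_1 X = X(x-\gamma_1 x,t)-X(x,t)$ and $m_i=\lambda_i\gamma_i$, this is precisely the tangent-line inequality $\phi(y)-\phi(x)\ge (y-x)\phi'(x)$ applied to $\phi=X(\cdot,t)$, first at $y=x+\gamma_0(1-x)$ and then at $y=x-\gamma_1 x$, multiplied respectively by the nonnegative weights $\lambda_0f_0$ and $\lambda_1f_1$ and summed. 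So the whole argument reduces to the convexity of $X(\cdot,t)$ in $x$.

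Establishing that convexity is, in my view, the main (and really only) obstacle, though it is a mild one. Setting $b(y) = \s y(1-y) - m_0f_0(1-y) + m_1f_1 y$, so that $X(x,t)$ is the value at time $t$ of the ODE $\dot y = -b(y)$ with $y(0)=x$, the variational ODEs for $p=\partial_x X$ and $q=\partial^2_{xx} X$ are
\[ \dot p = -b'(X)\,p,\quad p(0)=1, \qquad \dot q = -b'(X)\,q - b''(X)\,p^2,\quad q(0)=0. \]
Since $b''(y)=-2\s$ is a negative constant --- here the standing assumption $\s>0$ enters in an essential way --- and $p>0$ identically, the source in the $q$-equation is nonnegative, so $q(x,t)\ge 0$ for every $t\ge 0$. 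Combined with the reduction of the previous paragraph, this closes the proof. I remark that the scheme makes clear that the sign $\s>0$ is crucial: if selection went the other way, the convexity of the flow could be lost and one could not directly compare $u$ with $X$ along these lines.
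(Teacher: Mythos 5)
Your argument is correct, but it runs in the opposite direction from the paper's, so it is worth recording the difference. The paper keeps the comparison inside the transport equation \eqref{qs1}: it shows that $u$ is a \emph{supersolution} of \eqref{qs1}, by observing that $\partial_tu+\left(\s x(1-x)-m_0f_0(1-x)+m_1f_1x\right)\partial_xu$ equals $\lambda_0f_0\left[\jcal_0u-\gamma_0(1-x)\partial_xu\right]+\lambda_1f_1\left[\jcal_1u+\gamma_1x\partial_xu\right]$, a sum of second-order Taylor remainders which are nonnegative by the convexity of $u(\cdot,t)$ established in Lemma \ref{convex}. You keep the comparison inside the integro-differential equation \eqref{K1} instead: you show that $X$ is a \emph{subsolution} of \eqref{K1} via the tangent-line inequality for $X(\cdot,t)$, whose convexity you derive directly from the variational ODEs of the flow. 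The two arguments are exactly dual --- both hinge on the same pointwise inequalities $\jcal_0\phi\ge\gamma_0(1-x)\partial_x\phi$ and $\jcal_1\phi\ge-\gamma_1x\partial_x\phi$ for convex $\phi$, applied to $u$ in the paper and to $X$ in your version --- and your reductions and sign checks are all sound. What your route buys is self-containedness: the convexity of the ODE flow is elementary ($b''\equiv-2\s<0$ plus Gronwall), so the proposition no longer leans on Lemma \ref{convex}, whose own proof requires a comparison argument for $\partial^2_{xx}u$ in the nonlocal equation. What the paper's route buys is economy and lighter machinery: Lemma \ref{convex} is needed elsewhere anyway (e.g.\ in Proposition \ref{gammasurface}), and once $u$ is known to be a supersolution of \eqref{qs1} the comparison takes place in a pure transport equation, whereas your argument must invoke the full integro-differential comparison of Proposition \ref{comparison} to conclude $X\le u$. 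Your closing remark on the role of $\s>0$ is apt; the same sign condition is what makes the source term $2\s\,\partial_xu\ge0$ in the paper's Lemma \ref{convex}.
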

\begin{proof}
It is clear that $u(x,t)\le 1$, because the constant function $1$ is a supersolution to \eqref{K1}.
Concerning the estimate from below, it follows  by comparison after checking that $u$ is a supersolution of  \eqref{qs1}. Indeed
\begin{align*} 
 \partial_tu+ \left(\s x(1-x) - m_0f_0(1-x) +m_1 f_1 x\right)  \partial_x u = \\
\lambda_0f_0\left[ u(x+\gamma_0(1-x),t)-u(x,t)-\gamma_0(1-x)\partial_xu(x,t)\right]\\
+\lambda_1f_1\left[ u(x-\gamma_1x,t)-u(x,t)+\gamma_1x\partial_xu(x,t)\right] = \\
\lambda_0f_0 \int_0^1\partial_{xx}u(x+\theta\gamma_0(1-x),t) d\theta +
\lambda_1f_1 \int_0^1\partial_{xx}u(x-\theta\gamma_1x,t) d\theta  \ge 0
\end{align*}
 by convexity.
\end{proof}

\subsection{Large time behavior}
It is well known that the quasispecies equation \eqref{quasispecies2} has an asymptotic equilibrium  at the point $\bar x\in[0,1]$ singled out by the relation 
\[\s \bar x(1-\bar x) - m_0 f_0(1-\bar x) + m_1 f_1 \bar x=0 ,\]
and that its basin of attraction is given by $[0,1]$ or $[0,1)$,  depending on the value of the parameters (see Remark \ref{basin} later on). 
In our notation, this means that the  solution to \eqref{qs1} satisfies $\lim\limits_{t\to+\infty}X(x,t)=\bar x$ for every $x\in[0,1]$ (or for every $x\in[0,1)$).
It is interesting to study whether the rare mutation equation \eqref{K1} has the same asymptotic behavior, or rather exhibits a new equilibrium.
To begin with, we need to establish that the solution to \eqref{K1} actually admits a limit as $t\to+\infty$.
 For a restricted range of parameters, the estimate given in Lemma \ref{u_xest} suffices to deduce the large time behavior of $u$.
Otherwise, some more work is needed.
 
\begin{lemma}\label{u_xestfine}
If $\s\ge m_0f_0+m_1f_1$, then the solution to \eqref{K1} satisfies \[ \partial_xu(x,t)\le \frac{2}{1-x +e^{-(\s-m_0f_0) t}} e^{-m_1 f_1 t}\] 
for all $x\in[0,1]$ and $t\ge 0$. 
\end{lemma}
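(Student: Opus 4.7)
The plan is to prove the estimate by comparison, constructing an explicit supersolution of the linear integro-differential equation \eqref{K1_x} satisfied by $p=\partial_x u$. Guided by the claimed form of the bound, I would take
\[ P(x,t)=\frac{A(t)}{h(x,t)},\qquad A(t)=2e^{-m_1 f_1 t},\quad g(t)=e^{-(\s-m_0f_0)t},\quad h(x,t)=1-x+g(t).\]
Since $g(t)>0$ for every $t\ge 0$, the denominator $h$ is bounded away from zero on $[0,1]\times[0,+\infty)$, so $P$ is smooth there; the initial comparison $P(x,0)=2/(2-x)\ge 1=p(x,0)$ is immediate. Given a comparison principle for the linear non-local Cauchy problem \eqref{K1_x} (obtained along the lines of Proposition \ref{comparison}), it then suffices to verify that $P$ is a supersolution.

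The substance of the proof lies in this verification. A first reduction is the observation that $P$ is monotone increasing in $x$ (the denominator $h$ is decreasing in $x$), so that $\jcal_1 P\le 0$; since $\lambda_1 f_1(1-\gamma_1)\ge 0$, the $\jcal_1$ term on the right-hand side of \eqref{K1_x} is non-positive and may simply be discarded. It remains to prove
\[ \partial_t P+\s x(1-x)\,\partial_x P+\bigl(\s(1-2x)+m_0f_0+m_1f_1\bigr)P\ \ge\ \lambda_0 f_0(1-\gamma_0)\,\jcal_0 P. \]
Substituting $P=A/h$, using $A'=-m_1 f_1 A$ and $\partial_t h=-(\s-m_0f_0)g$, and multiplying through by $h^2/A>0$, the $\pm m_1 f_1$ contributions cancel and a short algebraic manipulation (the $\s$-polynomial in $x$ reassembles into $\s(1-x)^2$ and the $g$-terms into $2\s g(1-x)$) collapses the left-hand side into the compact form $A(1-x)\bigl[\s(h+g)+m_0 f_0\bigr]/h^2$.

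For the non-local term, setting $\tilde h(x,t)=h(x,t)-\gamma_0(1-x)=(1-\gamma_0)(1-x)+g(t)>0$, a direct calculation gives $\jcal_0 P=A\gamma_0(1-x)/(h\tilde h)$, so that, using $\lambda_0\gamma_0=m_0$, one has $\lambda_0 f_0(1-\gamma_0)\jcal_0 P=m_0 f_0(1-\gamma_0)A(1-x)/(h\tilde h)$. Cancelling the common non-negative factor $A(1-x)/h$ (the case $x=1$ being trivial) and cross-multiplying, the inequality reduces to $\tilde h\,\s(h+g)+m_0f_0\bigl[\tilde h-h(1-\gamma_0)\bigr]\ge 0$; the identity $\tilde h-h(1-\gamma_0)=\gamma_0 g$ then rewrites the left-hand side as the sum of two manifestly non-negative terms, $\tilde h\,\s(h+g)+m_0 f_0\gamma_0 g\ge 0$, and the supersolution property is established. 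The only genuinely delicate step is the one-line algebraic collapse of the left-hand side; everything else is routine. I note that the hypothesis $\s\ge m_0 f_0+m_1 f_1$ is not used in the supersolution check itself — it serves only to ensure that $g(t)\to 0$ as $t\to+\infty$, so that the resulting bound is a genuine improvement over that of Lemma \ref{u_xest}.
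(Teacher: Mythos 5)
Your proof is correct and follows essentially the same route as the paper: a comparison argument for \eqref{K1_x} with the very same supersolution $\overline p=2e^{-m_1f_1t}/\bigl(1-x+e^{-(\s-m_0f_0)t}\bigr)$, discarding the $\jcal_1$ term by monotonicity in $x$ and verifying the $\jcal_0$ inequality by direct computation. (The paper's displayed $\overline p$ carries a sign typo, $+m_1f_1$ instead of $-m_1f_1$ in the exponent, but its subsequent estimates match your function; your exact cross-multiplied identity is just a sharper form of the paper's one-sided bounds, and your observation that the hypothesis $\s\ge m_0f_0+m_1f_1$ is not needed for the supersolution check is accurate.)
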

\begin{proof}
It suffices to check that $\overline p = 2e^{(\s-m_0f_0+m_1f_1 )t} /(1+ (1-x) e^{(\s-m_0f_0) t}) $ is a supersolution to \eqref{K1_x}.
To shorten notation, we write $z=(1-x) e^{(\s-m_0f_0) t}$. 
Trivially $\overline p(x,0) = 2/(2-x)\ge 1$. Moreover easy computations give that 
\begin{align*}
&\partial_t\bar p +  \s (1-x) x \,  \partial_x \bar p + (\s(1-2x)+m_0f_0+m_1f_1) \bar p  \\
& \qquad =  \dfrac{2 e^{(\s-m_0f_0+m_1f_1 )t}}{(1+z)^2}\left(2\s(1-x) +( (\s(1-x)+m_0f_0) z\right)\\
& \qquad \ge \dfrac{2e^{(\s-m_0f_0+m_1f_1 )t}}{(1+z)^2}m_0f_0 z , \\
&\jcal_1\bar p \le 0 , \\
&\jcal_0\bar p = \dfrac{2e^{(\s-m_0f_0+m_1f_1 )t}}{(1+z)(1+(1-\gamma_0)z)} \gamma_0 z  \le \dfrac{2e^{(\s-m_0f_0+m_1f_1 )t}}{(1+z)^2} \dfrac{\gamma_0 z}{1-\gamma_0},
\end{align*}
and the thesis follows immediately.
\end{proof}

We are now in the position to draw the large time behavior of $u$.

\begin{proposition}\label{ltb}
For every choice of the parameters, the function \[\bar u(x)=\lim\limits_{t\to+\infty}u(x,t)\] is well defined for every $x\in[0,1]$. 
If, in addition, $m_1>0$ or $m_1=0$ and $m_0\ge \s/f_0$, then the limit $\bar u$ is constant.
\end{proposition}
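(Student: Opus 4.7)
The plan is to exploit monotonicity in $t$ of the extremes of $u(\cdot,t)$ together with quantitative decay of the oscillation $u(1,t)-u(0,t)$ obtained from the gradient estimates of Lemmas \ref{u_xest} and \ref{u_xestfine}. Since every constant is a classical solution of \eqref{K1}, the comparison principle implies that $t\mapsto\sup_x u(x,t)$ is non-increasing while $t\mapsto\inf_x u(x,t)$ is non-decreasing. Combined with the monotonicity in $x$ proved in Lemma \ref{u_xest}, the first extremum equals $u(1,t)$ and the second equals $u(0,t)$, so both $u(0,t)$ and $u(1,t)$ possess a limit as $t\to+\infty$.

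For the constancy assertion I would show that the oscillation $u(1,t)-u(0,t)=\int_0^1 \partial_x u(x,t)\,dx$ tends to $0$: monotonicity in $x$ then sandwiches every $u(x,t)$ between two convergent quantities with the same limit. When $\s< m_0 f_0+m_1 f_1$---a regime that in particular covers $m_1=0$ with $m_0>\s/f_0$---Lemma \ref{u_xest} directly yields $\partial_x u\le e^{(\s-m_0 f_0-m_1 f_1)t}\to 0$. When $\s\ge m_0 f_0+m_1 f_1$ with $m_1>0$, integrating the estimate of Lemma \ref{u_xestfine} in $x$ gives
\[
u(1,t)-u(0,t)\le 2e^{-m_1 f_1 t}\bigl[(\s-m_0 f_0)t+\ln(1+e^{-(\s-m_0 f_0)t})\bigr],
\]
which tends to zero because the factor $e^{-m_1 f_1 t}$ dominates the linear-in-$t$ growth. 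For the unconditional existence asserted in Part~1, whenever the oscillation vanishes the sandwich closes the argument; in the complementary range of parameters (e.g.\ $m_1=0$, $0<m_0<\s/f_0$), where $\bar u$ need not be constant, pointwise existence for interior $x$ would follow from a compactness argument on the equibounded family $\{u(\cdot,t)\}$, combined with uniqueness of the stationary solution of $\s x(1-x)\partial_x u^*=\lambda_0 f_0\jcal_0 u^*+\lambda_1 f_1\jcal_1 u^*$ prescribed by the endpoint values $\bar u(0),\bar u(1)$ from the first step and by the monotonicity and convexity constraints provided by Lemmas \ref{u_xest} and \ref{convex}.

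The main obstacle I anticipate is the borderline parameter case $m_1=0$, $m_0=\s/f_0$ that is included in the hypothesis of Part~2. There $\s-m_0 f_0-m_1 f_1=0$, so Lemma \ref{u_xest} reduces to $\partial_x u\le 1$ and Lemma \ref{u_xestfine} to $\partial_x u\le 2/(2-x)$, neither of which decays in $t$ or integrates to a vanishing quantity. Closing this gap will require either a refined gradient estimate exploiting the quadratic degeneracy of the drift at the equilibrium $\bar x=1$, or a perturbation/continuity argument approximating the borderline from within the strict regime $m_0>\s/f_0$, where the estimates above already apply.
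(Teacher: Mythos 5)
The parts of your argument that work are sound and in places cleaner than the paper's. Your observation that constants solve \eqref{K1}, so that by comparison and the semigroup property $u(1,t)=\sup_x u$ is non-increasing and $u(0,t)=\inf_x u$ is non-decreasing, gives the endpoint limits directly; the paper instead reads off $\partial_t u(1,t)=\lambda_1 f_1\left(u(1-\gamma_1,t)-u(1,t)\right)\le 0$ from the equation at $x=1$. Likewise, your device of integrating the bound of Lemma \ref{u_xestfine} over $x\in[0,1]$ to control the full oscillation $u(1,t)-u(0,t)$ handles the endpoint $x=1$ in one stroke, whereas the paper only gets decay of $\partial_x u$ on compacts of $[0,1)$ and must run a separate integrability argument (finiteness of $\int_0^{+\infty}\left[u(1,t)-u(1-\gamma_1,t)\right]dt$ plus existence of the limit of the integrand) to propagate constancy up to $x=1$. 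The computation of the integrated bound is correct and the conclusion for $m_1>0$ and for $m_1=0$, $m_0>\s/f_0$ is complete.

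The genuine gap is the borderline case $m_1=0$, $m_0=\s/f_0$, which you correctly identify as not covered by your estimates but do not close: neither of the two repair strategies you sketch is needed, and as stated neither is a proof. The paper disposes of this case in one line using Proposition \ref{below}: since here the quasispecies equilibrium is $\bar x=1$ and the explicit solution of \eqref{qs1} is $X(x,t)=1-\frac{1-x}{1+(1-x)\s t}\to 1$ for every $x$, the squeeze $1\ge u(x,t)\ge X(x,t)$ forces $\bar u\equiv 1$; no gradient estimate is required. You should adopt this argument rather than a perturbation in $m_0$ (for which no monotonicity of $u$ in $m_0$ has been established) or a refined gradient bound. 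Separately, your treatment of the remaining existence claim of Part 1 (the regime $m_1=0$, $0<m_0<\s/f_0$) is not yet a proof: compactness of $\{u(\cdot,t)\}$ only yields subsequential limits, and you give no reason why such limits should be \emph{stationary} solutions, which is the crux of identifying the limit; invariance of the $\omega$-limit set is not the same as each of its elements being a fixed point. (The paper's own wording here is also terse, and the honest argument is the refined estimate \eqref{u_xestpazzi} of Proposition \ref{ltbwum}, which shows $\partial_x u\to 0$ on compacts of $[0,1)$ for $\gamma\neq\gamma^{\ast}$; some such quantitative decay, not bare equicontinuity, is what actually delivers the limit.)
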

\begin{proof}
If $m_0f_0+m_1f_1>\s$, it follows by Lemma \ref{u_xest} via standard arguments that $u$ converges to a constant as $t\to+\infty$ (uniformly w.r.t.~$x$).

Similar statement follows by Lemma \ref{u_xestfine} if $m_0f_0+m_1f_1\le \s$ and $m_1>0$, provided that $x$ stay in any closed subset of $[0,1)$. Concerning the behavior at $x=1$, we deduce by equation \eqref{K1} that
\begin{align*}
\partial_t u(1,t)=\lambda_1 f_1 \left( u(1-\gamma_1,t) - u(1,t)\right) \le 0,
\end{align*}
as $u$ is increasing w.r.t.~$x$. Hence  $\bar u(1)=\lim\limits_{t\to+\infty}u(1,t)$ exists and is finite, actually
\begin{align*}
\bar u(1)=1-\lambda_1 f_1 \int_0^{+\infty}\left[ u(1,t)-u(1-\gamma_1,t) \right] dt.
\end{align*}
In particular
\begin{align*}
\lim\limits_{t\to+\infty}\left(u(1,t)-u(1-\gamma_1,t) \right)  = \bar u (1)-\bar u(1-\gamma_1) =0,
\end{align*}
 because the  function $t\mapsto u(1,t)-u(1-\gamma_1,t) $ has limit as $t\to+\infty$ and has finite integral on $[0,+\infty)$. 
This, in turns, implies that $\bar u$ is constant up to $x=1$. 

For the case $m_0=\s/f_0$ and $m_1=0$, we know by Proposition \ref{below} that $1\ge u(x,t)\ge X(x,t)$ for every $x\in[0,1]$ and $t\ge 0$.
But in this special case the asymptotically stable point of \eqref{qs1} is $\bar x =1$, hence 
\[ 1\ge u(x,t) \ge X(x,t)= 1-\dfrac{1-x}{1+(1-x)\s t} ,\]
 and $\bar u\equiv 1$.

The proof is completed by checking that $\lim\limits_{t\to+\infty}u(x,t)$ exists even if $m_0<\s/f_0$  and $m_1=0$.
In that case, it follows by Lemma \ref{u_xestfine} that $u(x,t)$ is equicontinuous w.r.t.~$x$ in any closed subset of $[0,1)$. Thus standard machinery for evolution equations yields that $u$  is equicontinuous w.r.t.~both $x$ and $t$ and therefore $\bar u(x)$ is well defined (and continuous) for $x\in[0,1)$.
On the other hand equation \eqref{K1} states that $\partial_t u(1,t) = 0$, so that $\bar u(1)=1=u(1,t)$ for all $t$.
\end{proof}

\begin{remark}\label{basin}
We mention in passing that Lemmas \ref{u_xest} and \ref{u_xestfine} imply that $u(x,t)\to \bar u$ uniformly w.r.t.~$x\in[0,1]$ if $\s\le m_0f_0+m_1f_1$, or, respectively, uniformly w.r.t.~$x$ in any closed set contained in $[0,1)$, if $\s>m_0f_0+m_1f_1$. This  behavior reflects that one of $X(x,t)\to\bar x$.
It is worth noting that, in case $m_0< \s/f_0$ and $m_1=0$, the quasispecies equation \eqref{qs1} gives
\[
X(x,t)=\bar x + \dfrac{x-\bar x}{1+\frac{1-x}{1-\bar x}\left(e^{\s(1-\bar x) t}-1\right)} ,
\]
with $\bar x=m_0f_0/\s<1$. Therefore the basin of attraction of $\bar x$ is only the interval $[0,1)$ and even the asymptotic limit of $X$  jumps from $\bar x$ to $1$ at $x=1$.
\end{remark}

For some choice of parameters, rare mutations give the same equilibrium of continuous mutations.
This happens, for instance, if $m_0=0$. In this case  the mutated descendants have higher fitness than their progenitors, and  mutation helps selection in fixing the high-fitness specie.

\begin{proposition}[Fair mutation]\label{fairmutation}
Assume that $m_0=0$, so that  the equilibrium for both the quasispecies dynamics \eqref{qs1} and the replicator equation \eqref{replicator} is $\bar x = 0$. The same holds also for \eqref{K1}, i.e.~$\bar u=0$.
To be specific, we have
\[
\dfrac{x}{1-x(1-e^{-\s t})} e^{-\s t} \ge u(x,t)\ge \dfrac{x}{1 - \dfrac{\s \, x}{\s+m_1f_1} (1-e^{-(\s+m_1f_1)})} e^{-(\s+m_1f_1)t}
\]
for all $t\ge0$.
\end{proposition}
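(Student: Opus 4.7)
My plan is to bracket $u(x,t)$ between two explicit elementary functions, both of which decay to zero as $t\to+\infty$: the upper bound will be the solution of the pure replicator equation (which ignores mutations) and the lower bound will be the solution of the deterministic quasispecies ODE with $m_0=0$ (which can be integrated in closed form). Once the sandwich is in place, both the two-sided estimate and the limit $\bar u\equiv 0$ follow simultaneously. A preliminary observation is that $m_0=\lambda_0\gamma_0=0$ with $\gamma_0\in(0,1]$ forces $\lambda_0=0$, so the $\jcal_0$ term drops out of \eqref{K1} entirely.

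For the upper bound, I recognize $X_0(x,t):=xe^{-\s t}/\bigl(1-x(1-e^{-\s t})\bigr)$ as the classical logistic-type solution of $\dot y=-\s y(1-y)$ with $y(0)=x$; equivalently, it satisfies $\partial_t X_0+\s x(1-x)\,\partial_x X_0=0$ with $X_0(x,0)=x$. Since $\partial_x X_0>0$ (immediate from the explicit formula), one has $\jcal_1 X_0=X_0((1-\gamma_1)x,t)-X_0(x,t)\le 0$, so $X_0$ is a smooth supersolution to \eqref{K1} with matching initial datum. The comparison principle of Proposition \ref{comparison} — applicable because Proposition \ref{classicalsolution} provides the smoothness of $u$ — then delivers $u\le X_0$.

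For the lower bound, I invoke Proposition \ref{below}, which gives $u(x,t)\ge X(x,t)$ where $X$ is the flux of the quasispecies ODE \eqref{quasispecies2}. With $m_0=0$ this ODE reads $\dot y=-\s y(1-y)-m_1f_1y$; the linearizing substitution $z=1/y$ turns it into $\dot z=(\s+m_1f_1)z-\s$, which integrates explicitly to yield exactly the expression in the statement.

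The only step that is not mechanical is verifying the supersolution property for $X_0$, and this reduces to its strict monotonicity in $x$, so I anticipate no real obstacle; all remaining manipulations are routine ODE integration. Letting $t\to+\infty$ in either side of the sandwich gives $\bar u\equiv 0$, which — together with Proposition \ref{ltb}, or by direct inspection — confirms that rare mutations do not alter the equilibrium in this "fair" regime.
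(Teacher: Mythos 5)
Your two-sided estimate is proved correctly and by essentially the paper's own route: the lower bound is Proposition \ref{below} in both cases, and the upper bound is the same comparison argument viewed from the other side (the paper notes that $\jcal_1 u\le 0$ because $u$ is increasing in $x$ by Lemma \ref{u_xest}, so $u$ is a subsolution of the pure transport equation; you instead check that the replicator solution $X_0$ is a supersolution of \eqref{K1} because $X_0$ is increasing in $x$ --- the two verifications are interchangeable). Your closed-form integration of the quasispecies ODE with $m_0=0$ is also correct.

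The genuine gap is the last step, ``letting $t\to+\infty$ in either side of the sandwich gives $\bar u\equiv 0$.'' This fails at $x=1$: there the upper bound equals $e^{-\s t}/e^{-\s t}=1$ identically, while the lower bound tends to $0$, so the sandwich yields only $0\le u(1,t)\le 1$ and is inconclusive. The point $x=1$ is not a removable technicality --- it is a rest point of the replicator flow, and the content of the claim $\bar u(1)=0$ is precisely that the jump process $N^{10}$ still pulls the population off it; compare Remark \ref{basin}, where in a different parameter regime the limit genuinely jumps at $x=1$. The paper closes this case with a separate argument that you would need to reproduce: evaluating \eqref{K1} at $x=1$ gives $u(1,t)=1-\lambda_1 f_1\int_0^t\big(u(1,s)-u(1-\gamma_1,s)\big)\,ds$ with a nonnegative integrand (monotonicity in $x$ again), so $t\mapsto u(1,t)$ is nonincreasing and bounded, hence convergent; the integrand therefore has a finite integral on $[0,+\infty)$ and admits a limit, so that limit must be $0$, forcing $\lim_{t\to+\infty}u(1,t)=\lim_{t\to+\infty}u(1-\gamma_1,t)=0$ since $1-\gamma_1<1$ and the limit is already known to vanish on $[0,1)$.
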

It has to be remarked that the first and last  terms of the inequality  are the  solution to the replicator equation \eqref{replicator}, and  quasispecies equation \eqref{qs1}, respectively. 
\begin{proof}
As $m_0=0$, the Kolmogorov equation \eqref{K1} becomes
\begin{align*}
 \partial_tu + \s (1-x) x \,  \partial_x u = \lambda_{1}f_1 \left[  u((1-\gamma_{1} )x,t) -u(x,t)\right] \le 0
\end{align*}
because $u$ is increasing w.r.t.~$x$. Then  $u$ is a subsolution of the transport equation $\partial_tu + \s (1-x) x \,  \partial_x u =0$ and the first inequality follows.
In particular, we have that $\lim\limits_{t\to+\infty} u(x,t)=0$ for all $0\le x< 1$.
As for  $x=1$,  we know that
\begin{align*} 
 u(1,t) = 1-\lambda_{1}f_1  \int_0^t \left(  u(1,s)-u(1-\gamma_{1 },s) \right) ds.
\end{align*}
Since the integrand is nonnegative by monotonicity, the function $t\mapsto u(1,t)$ is monotone decreasing and bounded, so it converges.
Hence the function $t\mapsto  u(1,t)-u(1-\gamma_{1},t)$ is nonnegative, has finite integral in $[0,+\infty)$ and has limit as $t\to+\infty$. Eventually $\lim\limits_{t\to+\infty} u(1,t)=\lim\limits_{t\to+\infty} u(1-\gamma_{1},t)=0$.
The proof is now complete because the second inequality  has been established in Proposition \ref{below}.
\end{proof}

The large time behavior of rare mutations reflects the one of continuous mutation also in the opposite situation, i.e.~when mutation towards the low-fitness specie is so relevant to  overwhelm selection.

\begin{proposition}[Unfair mutation, strong case]\label{ltbsum}
Assume that $m_1=0$ and $m_0\ge \s/f_0$, so that the equilibrium of the quasispecies dynamics \eqref{qs1} is $\bar x = 1$. The same holds also for \eqref{K1}, namely $\bar u=1$. Moreover
\begin{align*}
 1\ge u(x,t) & \ge 1-  \frac{1-x}{1+\frac{\s(1-x)}{m_0f_0-\s}( e^{(m_0f_0-\s) t}-1)} \quad && \text{if } m_0>\s/f_0,
\intertext{or}
1\ge u(x,t) & \ge 1-  \frac{1-x}{1+\s(1-x)t} && \text{if } m_0=\s/f_0.
\end{align*}
\end{proposition}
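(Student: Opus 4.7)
The plan is to combine the comparison inequality $u(x,t)\ge X(x,t)$ supplied by Proposition \ref{below} with an explicit integration of the one-dimensional quasispecies ODE \eqref{quasispecies2}. The stated lower bounds differ from $X$ itself only by a mild simplification of the denominator, so once $X$ is computed the proposition reduces to checking the direction of one inequality and then invoking Proposition \ref{ltb} to identify $\bar u$.

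With $m_1 = 0$ the ODE \eqref{quasispecies2} collapses to $\dot x = (1-x)(m_0 f_0 - \s x)$. Setting $y = 1 - x$ and $a = m_0 f_0 - \s \ge 0$ yields the Bernoulli equation $\dot y = -ay - \s y^2$, which the substitution $w = 1/y$ linearizes into $\dot w = aw + \s$. In the strict case $a>0$ the integrating factor $e^{-at}$ produces
\[
X(x,t) = 1 - \frac{1-x}{e^{at} + \frac{\s(1-x)}{a}\bigl(e^{at}-1\bigr)},
\]
while in the critical case $a = 0$ direct integration of $\dot y = -\s y^2$ gives $X(x,t) = 1 - (1-x)/\bigl[1 + \s(1-x)t\bigr]$, which is exactly the stated lower bound in that case.

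For $a>0$ the bound in the proposition differs from $X$ only by replacing the leading $e^{at}$ in the denominator by $1$. Since $e^{at}>1$ this \emph{decreases} the denominator, hence \emph{increases} the fraction subtracted from $1$, so the simplified expression is genuinely smaller than $X$, and therefore smaller than $u$ by Proposition \ref{below}. The upper bound $u \le 1$ is immediate because the constant function $1$ is a supersolution of \eqref{K1}. Both lower bounds tend to $1$ as $t\to +\infty$, while Proposition \ref{ltb} guarantees that under the present hypotheses ($m_1=0$ and $m_0\ge \s/f_0$) the pointwise limit $\bar u$ exists and is constant; combined, these give $\bar u \equiv 1$.

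The only delicate point is bookkeeping: making sure the Bernoulli linearization is carried out with the correct signs and verifying that the proposed denominator simplification truly produces a lower bound rather than an upper one. No idea beyond Proposition \ref{below} and the explicit ODE solution is required, which is natural given that in this regime selection is too weak to counter mutation and the deterministic dynamics itself already drives the frequency of the low-fitness species to one.
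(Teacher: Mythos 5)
Your proposal is correct and follows essentially the same route as the paper: the lower bound comes from $u\ge X$ (Proposition~\ref{below}) together with the explicit integration of the quasispecies ODE \eqref{quasispecies2}, the upper bound from the constant supersolution $1$, and $\bar u\equiv 1$ then follows by squeezing. Your extra bookkeeping is in fact warranted: the exact solution has $e^{(m_0f_0-\s)t}$ where the stated denominator has $1$, so the displayed right-hand side is (for $t>0$ and $x<1$) a strictly smaller lower bound for $X$ rather than $X$ itself as the paper's preceding remark asserts --- but, as you observe, the replacement only decreases the expression, so the inequality goes the right way and the proposition stands.
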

Notice that the quantity in the right-hand side of both inequalities is the solution of the respective quasispecies equation.
\begin{proof}
The thesis follows by Proposition \ref{below}, because in this particular setting the  equilibrium condition for the standard quasispecies equation reads $(\s  x - m_0f_0)(1- x)$, and the only root contained in the segment line $[0,1]$ is $\bar x=1$.
\end{proof}

Something new happens when mutation is unfair (i.e.~$m_1=0$) but too weak to overwhelm selection (i.e.~$0<m_0<\s/f_0$).
 In this case, the behavior at large time  depends on the time intensity of the point process governing mutations, and it does not follow the relative quasispecies equation anymore.
As expected, the quasispecies equation is recovered as the time intensity goes to infinity.
This topic is  illustrated in next subsection. 

\subsection{Unfair, but weak, mutation}
We go into more details and inspect the case  $m_1=0$, $\s>m_0f_0$.
As only the coefficients $f_0$, $m_0$, $\gamma_0$ and $\lambda_0$ have effects, we shall omit to write the index ``$0$''.
The quasispecies equation \eqref{qs1} reads
\begin{equation}\label{Kgamma0}
\partial_t u + (\s x-mf)(1-x) \partial_x u =0,
\end{equation}
and has  a stable rest point at $\bar x= mf/\s$. Its solution can be explicitly written as
 \[
X(x,t)= \dfrac{mf}{\s} + \dfrac{x-\frac{mf}{\s}}{1+ \frac{\s(1-x)}{\s-mf}(e^{(\s-mf)t}-1)} ,
\]
 and only depends by the parameters $\s$ and $mf$.
In the rare mutation setting, there is an entire curve of parameters $(\gamma, \lambda)\in(0,1)\times(m,\infty)$  that give back the same $m$ and $\s$: this curve can be seen as the graph $\lambda=m/\gamma$. 
As $\gamma$ goes to $0$, the time intensity  $\lambda$ increases, and the paths of the point process driving mutations becomes continuous. On the contrary, at $\gamma=1$ the time intensity gets its minimum $\lambda=m$, and mutations are concentrated in rare events that happen simultaneously to all individuals. The respective Kolmogorov equation is
\begin{equation}\label{Kgamma1}
\partial_t u +\s x (1-x) \partial_x u= mf \left[ u(1,t) -u(x,t)\right] . 
\end{equation}
It is easy to check that the only solution with $u(x,0)=x$ is 
\[
 Z(x,t)=1- \dfrac{(1-x) \, e^{(\s-mf)t}}{1+ (1-x)(e^{\s t}-1)} .
\]
In order to study the dependence of the expected density by the parameter $\gamma$ (equivalently, by the time intensity $\lambda=m/\gamma$), we
denote by $u_{\gamma}$ the respective solution of \eqref{K1}, namely
\begin{equation}\label{Kgamma}
\left\{\begin{array}{ll}
\partial_t u_{\gamma} +\s x (1-x) \partial_x u_{\gamma}= \dfrac{mf}{\gamma} \jcal_0 u_{\gamma} , & 0\le x\le 1, \, t>0 \\
u_{\gamma}(x,0)=x , & 0\le x\le 1.\end{array}\right.
\end{equation}
The graph of $(0,1]\times[0,1]\times[0,+\infty)\ni(\gamma,x,t) \mapsto u_{\gamma}(x,t)$ is a continuous hypersurface that spans the region between the graph of $X(x,t)$ and the one of $Z(x,t)$.

\begin{proposition}\label{gammasurface}
For every $(x,t)$, the function $(0,1]\ni\gamma\mapsto u_{\gamma}(x,t)$ is nondecreasing and continuous, with $u_1(x,t)=Z(x,t)$ and $\lim\limits_{\gamma\to 0} u_{\gamma}(x,t)=X(x,t)$.
Moreover both continuity and convergence are uniform w.r.t.~$(x,t)$ in each compact set $[0,1]\times [0,T]$. 
\end{proposition}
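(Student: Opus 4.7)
The plan is to combine convexity of $u_\gamma$ with a comparison argument, and to control the apparent $1/\gamma$ singularity by a Taylor expansion anchored in a uniform-in-$\gamma$ second-derivative bound. The crucial preliminary is that the constants in Lemmas~\ref{u_xest} and~\ref{convex} can be chosen independently of $\gamma$: in the proof of Lemma~\ref{convex} the supersolution $\overline q(x,t)=(2\s/\ep) e^{(2\s-mf+\ep)t}$ is constant in $x$, so every jump term vanishes and only the fixed product $mf$ enters, giving sup-norm bounds on $\partial_x u_\gamma$ and $\partial_{xx}u_\gamma$ on $[0,1]\times[0,T]$ that are uniform in $\gamma\in(0,1]$.

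Monotonicity comes from a chord-slope identity. For a convex function $u$ of one variable, the slope $(u(y)-u(x))/(y-x)$ is nondecreasing in $y$; taking $y=x+\gamma(1-x)$ shows that $\tfrac{1}{\gamma}\jcal_0 u(x,t)=(1-x)\cdot\tfrac{u(x+\gamma(1-x))-u(x)}{\gamma(1-x)}$ is nondecreasing in $\gamma$. Since $u_{\gamma_2}$ is convex in $x$ by Lemma~\ref{convex}, for $\gamma_1\le\gamma_2$ the function $u_{\gamma_2}$ becomes a supersolution of the $\gamma_1$-version of \eqref{Kgamma} with identical initial datum, and the comparison principle (Proposition~\ref{comparison}) yields $u_{\gamma_1}\le u_{\gamma_2}$.

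Next I identify the endpoints. At $\gamma=1$ the operator reduces to $\jcal_0 u=u(1,t)-u(x,t)$ and \eqref{Kgamma} collapses to \eqref{Kgamma1}; a routine verification shows that the explicit $Z$ solves this with $Z(x,0)=x$, so uniqueness forces $u_1=Z$. For $\gamma\to 0$, Taylor's theorem yields
\[
\frac{mf}{\gamma}\jcal_0 u_\gamma(x,t)=mf(1-x)\partial_x u_\gamma(x,t)+\frac{mf\gamma(1-x)^2}{2}\partial_{xx}u_\gamma(\xi_\gamma,t),
\]
with $\xi_\gamma\in[x,x+\gamma(1-x)]$, and the uniform $\partial_{xx}$-bound forces the remainder to be $O(\gamma)$ on $[0,1]\times[0,T]$. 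Hence $u_\gamma$ satisfies the quasispecies equation \eqref{Kgamma0} up to an $O(\gamma)$ source; combining this with $u_\gamma\ge X$ from Proposition~\ref{below}, a standard comparison for the transport equation \eqref{Kgamma0} gives $\sup_{[0,1]\times[0,T]}|u_\gamma-X|=O(\gamma)$.

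Finally, continuity at an arbitrary $\gamma_0\in(0,1]$ follows by Arzelà--Ascoli. The uniform spatial-derivative bounds, together with the equation and the Taylor identity above (which also bounds $\partial_t u_\gamma$ uniformly), make $\{u_\gamma\}$ equicontinuous on $[0,1]\times[0,T]$. Any sequence $\gamma_n\to\gamma_0$ then admits a uniformly convergent subsequence, whose limit solves the $\gamma_0$-equation by direct passage to the limit in the mild formulation; uniqueness of mild solutions (Proposition~\ref{wpmild}) identifies it as $u_{\gamma_0}$, and the monotonicity in $\gamma$ upgrades subsequential to full convergence. The principal obstacle throughout is the $1/\gamma$ prefactor, which \emph{a priori} obstructs both the comparison chain and any limit passage; it is defused by the fact that the second-derivative estimate of Lemma~\ref{convex} is $\gamma$-free, making the Taylor expansion genuinely useful both for $\gamma\to 0$ and for the equicontinuity needed in the compactness step.
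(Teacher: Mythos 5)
Your proof is correct, but it reaches the conclusions by routes that differ from the paper's at every step except the identification $u_1=Z$. For monotonicity the paper \emph{formally} differentiates \eqref{Kgamma} in $\gamma$ and applies the comparison principle to the resulting problem \eqref{dergamma} for $w=\partial_\gamma u_\gamma$, convexity entering only through the sign of the source $h$; your chord--slope argument uses the same convexity (Lemma \ref{convex}) but compares $u_{\gamma_1}$ with $u_{\gamma_2}$ directly, which has the merit of not presupposing differentiability of $\gamma\mapsto u_\gamma$ --- precisely the step the paper labels ``(formal)''. The price is that you obtain only continuity in $\gamma$, via Arzel\`a--Ascoli plus uniqueness of mild solutions, where the paper's $0\le w\le \c e^{\mu t}$ gives Lipschitz continuity with an explicit constant. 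For $\gamma\to 0$ the paper takes half-relaxed viscosity limits $u^\pm$, compares them as sub/supersolutions of the transport equation \eqref{Kgamma0}, and invokes Dini's theorem to pass from pointwise to uniform convergence; your Taylor-expansion argument is more elementary and quantitative, yielding a rate $O(\gamma)$ on $[0,1]\times[0,T]$, but it leans more heavily on the $\gamma$-uniformity of the second-derivative bound. That uniformity claim is correct and worth stating carefully (the paper needs it tacitly as well, to bound $h$ independently of $\gamma$): your justification is slightly imprecise in saying that ``only the fixed product $mf$ enters'', since the zero-order coefficient contains $(2-\gamma_0)m_0f_0=(2-\gamma)mf$, but this is bounded below by $mf$ for $\gamma\in(0,1]$, so the same $\c,\mu$ indeed work for every $\gamma$. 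One last point you should make explicit: the comparison you use for the inhomogeneous version of \eqref{Kgamma0} is not literally Proposition \ref{comparison}, whose barrier construction exploits that the drift is tangential at the boundary (Remark \ref{edges}) --- a property the quasispecies drift $(\s x-mf)(1-x)$ fails at $x=0$; since $u_\gamma$ is smooth, however, the needed inequality follows by integrating $u_\gamma\mp C\gamma t$ along the backward characteristics, which coincide with the quasispecies flow and hence remain in $[0,1]$. (Also, citing Proposition \ref{below} in that step is redundant: the two-sided comparison already gives $|u_\gamma-X|\le C\gamma T$.)
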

\begin{proof}
To begin with, we check that the functions $u_{\gamma}$ are continuous and ordered w.r.t.~$\gamma$.
A (formal) derivation of equation \eqref{Kgamma} yields that $w(x,t,\gamma)=\partial_{\gamma} u_{\gamma}(x,t)$ solves
\begin{align}\label{dergamma}
\left\{\begin{array}{lr}
\partial_t w +\s x (1-x) \partial_x w = \dfrac{mf}{\gamma} \jcal_0 w + h  & \quad  0\le x\le 1, t>0, 0<\gamma <1 , \\
w(x,0,\gamma)= 0 & \qquad 0\le x\le 1, t=0, 0<\gamma< 1  ,
\end{array}\right.\end{align}
where
\begin{align*}
 h(x,t,\gamma)= & \dfrac{mf}{\gamma^2} \left[ u_{\gamma}(x,t)-u_{\gamma}(x+\gamma(1-x),t)+\gamma(1-x) \partial_xu_{\gamma}(x+\gamma(1-x),t)\right]   \\
= & \dfrac{mf}{2} (1-x)^2 \int_0^1\theta \partial^2_{xx} u_{\gamma}(x+\theta\gamma(1-x),t)d\theta .
\end{align*}
By Lemma \ref{convex}, $0\le h\le \c e^{\mu t}$ (with, possibly, a different constant $\c$). Hence comparison principle gives $0\le w\le \c e^{\mu t} $.
This yields, in turn, that the function $(0,1]\ni\gamma\mapsto u_{\gamma}(x,t)$ is nondecreasing and Lipschitz continuous, and furnishes an estimate of the Lipschitz constant, which is equibounded for all $(x,t) \in[0,1]\times[0,T]$, as $T>0$.
In particular, $u_{\gamma}$ gets near $Z$ as $\gamma\to 1$, with uniform convergence for $(x,t)\in [0,1]\times[0,T]$. \\
We next check that $u_{\gamma}$ approaches $X(x,t)$, as $\gamma\to 0$, by  the viscosity solution approach.
Let us begin by defining
\[ u^+(x,t)=\limsup\limits_{(y,s,\gamma)\to(x,t,0)} u_{\gamma}(y,s) \quad \text{and} \quad u^-(x,t)=\liminf\limits_{(y,s,\gamma)\to(x,t,0)} u_{\gamma}(y,s) .\]
By construction, $u^+$ and $u^-$ are respectively upper and lower semicontinuous, moreover $u^+(x,t)\ge u^-(x,t)$, and certainly $u^+(x,0)=x=u^-(x,0)$. It is trivial to check that $u^+$ and $u^-$ are (possibly discontinuous) viscosity sub and supersolution to the transport equation \eqref{Kgamma0}.
Therefore by comparison $u^+\le u^-$. Thus $u^+(x,t)=u^-(x,t)$ is continuous and equal to $\lim\limits_{\gamma\to 0} u_{\gamma}(x,t)$. Next, uniqueness for the transport equation yields that $\lim\limits_{\gamma\to 0} u_{\gamma}(x,t) =X(x,t)$ pointwise.
Eventually, Dini's monotone convergence Theorem implies uniform convergence on any compact set $[0,1]\times[0,T]$. 
\end{proof}

The family  $u_{\gamma}(x,t)$ spans the segment between $X(x,t)$ and $Z(x,t)$. 
As time increases,  the quasispecies solution $X(x,t)$ converges to the equilibrium point $\bar{x}=mf/\s<1$, while $Z(x,t)\to 1$. Similarly we expect that the asymptotic equilibrium of $u_{\gamma}$ spans the segment between $\bar x$ and 1, as $\gamma$ goes from 0 to 1.
To this aim we investigate the large time behavior of the functions $u_{\gamma}$. Trivially $\lim\limits_{t\to+\infty} u_{\gamma}(1,t)=1$ for any $\gamma$.
Besides $\lim\limits_{t\to+\infty}u_{\gamma}(x,t)$ does not depends by $x\in[0,1)$, for all values of $\gamma$ except at most one.

\begin{proposition}\label{ltbwum}
Take  $\s> mf$ and  let $\gamma^{\ast}\in(0,1)$ be the only solution to 
\[\s\gamma + mf\log(1-\gamma) =0 .\] 
If $\gamma\in(0,1)\setminus\{\gamma^{\ast}\}$, then there exist a number $\bar u_{\gamma}$ so that $\lim\limits_{t\to+\infty}u_{\gamma}(x,t)=\bar u_{\gamma}$ for every $x\in[0,1)$.
\end{proposition}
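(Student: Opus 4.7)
The existence of the pointwise limit $\bar u_\gamma(x)=\lim_{t\to+\infty}u_\gamma(x,t)$ for $x\in[0,1)$ is already granted by Proposition \ref{ltb}; the task is only to show that $\bar u_\gamma$ does not depend on $x\in[0,1)$. I would split the analysis according to the sign of the function $\phi(\gamma):=\s\gamma+mf\log(1-\gamma)$, which vanishes precisely at $\gamma^{\ast}$ and satisfies $\phi(\gamma)/\gamma=\s+\lambda f\log(1-\gamma)$ (recall $\lambda=m/\gamma$). In both cases I would construct an explicit comparison-based decay estimate whose exponential rate is, to leading order, proportional to $\phi(\gamma)/\gamma$, which explains the occurrence of the threshold $\gamma^{\ast}$.

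\emph{Case $\gamma<\gamma^{\ast}$ (so $\phi(\gamma)>0$).} By Proposition \ref{classicalsolution} I may differentiate \eqref{Kgamma} in $x$; the function $p=\partial_xu_\gamma$ solves a linear integro-differential equation with $p(\cdot,0)\equiv 1$. I would test as supersolution
\[
\bar p(x,t)=\frac{e^{-C(\alpha)t}}{(1-x)^\alpha},\qquad \alpha\in(1,2),
\]
and verify by direct substitution that $\bar p$ is a supersolution of the $p$-equation whenever $C(\alpha)\le\s(\alpha-1)-\lambda f\bigl((1-\gamma)^{1-\alpha}-1\bigr)$ (the worst $x$ being $x=1$); meanwhile $\bar p(\cdot,0)\ge 1$, and $\bar p(x,t)\to+\infty$ as $x\to 1^-$ automatically controls the absorbing boundary. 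The first-order expansion
\[
C(\alpha)=\frac{\alpha-1}{\gamma}\,\phi(\gamma)+O\bigl((\alpha-1)^2\bigr)
\]
shows that when $\phi(\gamma)>0$ one can pick $\alpha$ slightly larger than $1$ so that $C(\alpha)>0$. Comparison then yields $\partial_xu_\gamma(x,t)\le e^{-C(\alpha)t}/(1-x)^\alpha$, which forces $\partial_xu_\gamma(\cdot,t)\to 0$ uniformly on each $[0,x_0]\subset[0,1)$; hence $\bar u_\gamma$ is constant on $[0,1)$.

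\emph{Case $\gamma>\gamma^{\ast}$ (so $\phi(\gamma)<0$).} Here I expect $\bar u_\gamma\equiv 1$, so the natural object is $v:=1-u_\gamma$, which by direct substitution satisfies the same equation \eqref{Kgamma} but with datum $v(\cdot,0)=1-x$. I would test
\[
\bar v(x,t)=(1-x)^{\beta}\,e^{R(\beta)t},\qquad \beta\in(0,1],
\]
and check that it is a supersolution whenever $R(\beta)\ge\s\beta+\lambda f\bigl((1-\gamma)^\beta-1\bigr)$ (the worst $x$ is again $x=1$), while $\bar v(\cdot,0)\ge v(\cdot,0)$ holds automatically for $\beta\le 1$. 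The expansion $R(\beta)=\beta\,\phi(\gamma)/\gamma+O(\beta^2)$ allows, when $\phi(\gamma)<0$, the choice of $\beta>0$ small enough that $R(\beta)<0$; comparison then gives $0\le v(x,t)\le e^{R(\beta)t}\to 0$ uniformly in $x\in[0,1]$, whence $\bar u_\gamma\equiv 1$.

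The main technical step is the choice of the ansatz: in both cases the small-parameter expansion of the exponential rate reduces to $\phi(\gamma)/\gamma$, and it is exactly this fact that isolates $\gamma^{\ast}$; at $\gamma=\gamma^{\ast}$ both rates vanish to first order and neither ansatz produces a strictly decaying supersolution. A secondary point is the comparison principle against the singular supersolution $\bar p$, which I would handle by the standard exponential shift $w\mapsto e^{-\mu t}w$ with $\mu$ large, observing that the supremum of $p-\bar p$ cannot be attained near $x=1$ (since $\bar p$ blows up there) and that the jump $x\mapsto x+\gamma(1-x)$ keeps $[0,1)$ invariant, so no boundary term enters.
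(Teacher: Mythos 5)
Your proposal is correct, and it is worth separating its two halves. For $\gamma<\gamma^{\ast}$ you do essentially what the paper does: its proof also rests on a supersolution for $p=\partial_xu_{\gamma}$, namely $\overline p=2e^{(\s-mf)t}/\bigl(1+(1-x)^{\alpha}e^{(\s-mf+\beta)t}\bigr)$, which for fixed $x<1$ and large $t$ behaves exactly like your $e^{-C(\alpha)t}(1-x)^{-\alpha}$, and the admissible decay rate is the same function $\beta(\alpha)=\s(\alpha-1)-\frac{mf}{\gamma}\bigl[(1-\gamma)^{1-\alpha}-1\bigr]$ you compute, with $\gamma^{\ast}$ entering in both arguments through $\beta'(1)=\s+\frac{mf}{\gamma}\log(1-\gamma)=\phi(\gamma)/\gamma$. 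The structural difference is that the paper does not split cases: since $\alpha\mapsto\beta(\alpha)$ is strictly concave (the paper writes ``convex'', but concavity is what its argument actually uses) and vanishes at $\alpha=1$, whenever $\beta'(1)\neq0$ one can choose $\alpha\in(0,2)$ on the appropriate side of $1$ so that $\beta(\alpha)>0$, and the single gradient estimate then covers both signs of $\phi(\gamma)$; your restriction to $\alpha>1$ is the only reason this route gives you just the first case. A minor technical advantage of the paper's barrier is that it is bounded at $x=1$ (it equals $2/(1+(1-x)^{\alpha})$ at $t=0$), so it avoids the comparison-with-a-singular-supersolution issue that you correctly flag and resolve via the boundedness of $p$ on $[0,1]\times[0,T]$ from Lemma \ref{u_xest}. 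For $\gamma>\gamma^{\ast}$ your argument is genuinely different: instead of estimating the gradient you run comparison directly on $v=1-u_{\gamma}$, which indeed solves the same equation with datum $1-x$, against the supersolution $(1-x)^{\beta}e^{R(\beta)t}$ with $R(\beta)=\s\beta+\frac{mf}{\gamma}\bigl[(1-\gamma)^{\beta}-1\bigr]<0$ for small $\beta>0$. I checked the supersolution inequality (the worst point is $x=1$, and $(1-x)^{\beta}\ge 1-x$ for $\beta\le1$), and this buys strictly more than the proposition states: it identifies $\bar u_{\gamma}\equiv1$ for every $\gamma>\gamma^{\ast}$ and yields convergence uniform on all of $[0,1]$, whereas the paper only asserts existence and $x$-independence of the limit on $[0,1)$.
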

\begin{proof}
We establish that for every $\gamma\in(0,1)$, $\gamma\neq\gamma^{\ast}$, there exist $\alpha\in(0,2)$ and $\beta>0$ such that
 the solution to \eqref{K1} satisfies 
\begin{equation}\label{u_xestpazzi}
\partial_xu(x,t)\le 2 e^{-\beta t}/\left((1-x)^{\alpha}+e^{-(\s-mf+\beta) t}\right).
\end{equation}
The thesis follows by \eqref{u_xestpazzi} by the same arguments of Proposition \ref{ltb}.
In view of proving \eqref{u_xestpazzi}, we follow the line of Lemma  \ref{u_xestfine} and check that, for any $\alpha \in(0,2)$, there exists $\beta(\alpha)\in\R$ (possibly negative) such that
\[\overline p = 2 e^{ (\s-mf) t}/(1+ (1-x)^{\alpha}e^{ (\s-mf+\beta) t})\] is a supersolution to \eqref{K1_x}.
Set $z=(1-x)^{\alpha}e^{ (\beta+\s-mf) t}$, we have by computations
\begin{align*}
&\partial_t\bar p +  \s (1-x) x \,  \partial_x \bar p +\left(\s(1-2x) +mf\right) \bar p  \\
& = \dfrac{2 e^{ (\s-mf) t}}{(1+z)^2} \left(2\s(1-x) -\s (2-\alpha) x+ \left(\s +mf-\beta  \right)z\right) \\
& \ge \dfrac{2 z \, e^{ (\s-mf) t}}{(1+z)^2}  \left(\s(\alpha-1) + mf - \beta \right) , \\
&\jcal_0\bar p = 
\dfrac{2 z \,e^{ (\s-mf) t}}{(1+z)^2} \kappa(z) ,
\intertext{for $\kappa(z)=\dfrac{(1-(1-\gamma)^{\alpha})(1+z)}{1+(1-\gamma)^{\alpha}z}$. Since $\kappa$ is monotone increasing we get}
&\jcal_0\bar p \le 
\dfrac{2 z \, e^{ (\s-mf) t}}{(1+z)^2}  \left((1-\gamma)^{-\alpha}-1\right)  .
\end{align*}
Hence $\bar p$ is a supersolution provided that $\beta \le \beta(\alpha)= \s(\alpha-1) - mf \dfrac{(1-\gamma)^{1-\alpha}-1}{\gamma}$.
We conclude the proof by showing that $[0,2]\ni\alpha\mapsto \beta(\alpha)$ has a positive maximum.
Indeed, $\beta(\alpha)$ is strictly convex with $\beta(1)=0$, therefore its maximum is positive unless it is reached at $\alpha=1$.
But $\alpha=1$ is not a critical point for $\gamma\neq\gamma^{\ast}$, because $ \beta'(1) = \s +  \log (1-\gamma)\, mf /\gamma \neq 0 $.
\end{proof}

We already know  that $mf/\s\le \bar u_{\gamma}\le 1$; actually we can prove more, namely that $\bar u_{\gamma}\to mf/\s$ as $\gamma\to 0$, and $\bar u_{\gamma}\to 1$ as $\gamma\to 1$.

\begin{proposition}\label{collauno}
We have $\lim\limits_{\gamma\to 1}\bar u_{\gamma}=1$, and $\lim\limits_{\gamma\to 0}\bar u_{\gamma}=\frac{mf}{\s}$.
Indeed, for every $\ep>0$ and $L\in(0,1)$, there are $T>0$ and $\Gamma_0,\Gamma_1\in(0,1)$ so that 
\begin{align*}
1-\ep \le  u_{\gamma}(x,t) & \le 1 \quad & \mbox{for all $\gamma\in[\Gamma_1,1]$, $x\in[0,1]$, $t\ge T$,  }
\\
\bar x \le  u_{\gamma}(x,t) & \le \bar x+\ep \quad & \mbox{for all $\gamma\in(0,\Gamma_0]$, $x\in[0,L]$, $t\ge T$.}
\end{align*}
\end{proposition}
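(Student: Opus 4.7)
The plan is to treat the two limits separately, each time combining Proposition~\ref{gammasurface} (uniform convergence of $u_\gamma$ on $[0,1]\times[0,T]$ as $\gamma\to 0$ or $\gamma\to 1$) on a bounded interval $[0,T]$ with an asymptotic argument that propagates the bound to $t\ge T$. For the $\gamma\to 1$ limit the propagator is simply the comparison principle with a constant solution; for the $\gamma\to 0$ limit it relies on the quantitative derivative bound \eqref{u_xestpazzi} with a decay rate $\beta>0$ that one shows to be uniform for $\gamma$ near zero.

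For the assertion $\bar u_\gamma\to 1$, one first reads from the explicit formula that $1-Z(x,t)\le e^{-mft}$ uniformly in $x\in[0,1]$, so there exists $T$ with $Z(\cdot,T)\ge 1-\ep/2$ on $[0,1]$. By Proposition~\ref{gammasurface} one then finds $\Gamma_1<1$ with $u_\gamma(\cdot,T)\ge 1-\ep$ on $[0,1]$ for all $\gamma\in[\Gamma_1,1]$. Since every constant is a stationary solution of \eqref{Kgamma}, the comparison principle applied to the Cauchy problem restarted at time $T$ with datum $u_\gamma(\cdot,T)$ yields $u_\gamma(x,t)\ge 1-\ep$ for every $t\ge T$ and $x\in[0,1]$, which is the first claimed inequality.

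For the assertion $\bar u_\gamma\to\bar x$ the outline is the same but the propagation is more delicate, because $X$ does \emph{not} converge to $\bar x$ uniformly on $[0,1]$ (it stays equal to $1$ at $x=1$, so no constant subsolution is available). First, pick $T_1$ so that $X(\cdot,T_1)\le\bar x+\ep/4$ on $[0,L]$, and then by Proposition~\ref{gammasurface} find $\Gamma_0'$ such that $|u_\gamma(\cdot,T_1)-X(\cdot,T_1)|\le\ep/4$ on $[0,1]$ for $\gamma\le\Gamma_0'$. Next, evaluating \eqref{Kgamma} at $x=0$ yields $\partial_t u_\gamma(0,t)=\lambda_0 f\,[u_\gamma(\gamma,t)-u_\gamma(0,t)]\ge 0$ by Lemma~\ref{u_xest}, so $t\mapsto u_\gamma(0,t)$ is nondecreasing with limit $\bar u_\gamma$; bounding $u_\gamma(\gamma,t)-u_\gamma(0,t)\le 2\gamma e^{-\beta t}$ via \eqref{u_xestpazzi} and using $\lambda_0\gamma=m$,
\[
\bar u_\gamma-u_\gamma(0,T_1)=\int_{T_1}^{+\infty}\lambda_0 f\,[u_\gamma(\gamma,t)-u_\gamma(0,t)]\,dt\le \frac{2mf}{\beta}\,e^{-\beta T_1},
\]
which is $<\ep/4$ for $T_1$ large. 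This gives $\bar u_\gamma\le\bar x+3\ep/4$ and, since $u_\gamma(0,t)\le\bar u_\gamma$, also $u_\gamma(0,t)\le\bar x+3\ep/4$ for every $t\ge 0$. For general $x\in[0,L]$ one uses $u_\gamma(x,t)-u_\gamma(0,t)\le 2L(1-L)^{-\alpha}e^{-\beta t}$, again from \eqref{u_xestpazzi}, which is $<\ep/4$ for $t\ge T\ge T_1$ sufficiently large. The matching lower bound follows from $u_\gamma(x,t)\ge u_\gamma(0,t)\ge u_\gamma(0,T_1)\ge X(0,T_1)-\ep/4\ge\bar x-\ep/2$, which yields the desired inequality after a final adjustment of $\ep$.

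The central technical point is ensuring that the decay rate $\beta$ in \eqref{u_xestpazzi} stays bounded below by a positive constant $\beta_0$ uniformly for $\gamma$ in a neighborhood of $0$. The proof of Proposition~\ref{ltbwum} furnishes the formula $\beta(\alpha)=\s(\alpha-1)-mf\,((1-\gamma)^{1-\alpha}-1)/\gamma$, and a Taylor expansion of $(1-\gamma)^{1-\alpha}$ at $\gamma=0$ gives $\beta(\alpha)\to(\s-mf)(\alpha-1)$, which is strictly positive for any $\alpha\in(1,2)$ since $\s>mf$. Fixing for instance $\alpha=3/2$ and then taking $\Gamma_0\le\Gamma_0'$ small enough (and below the exceptional value $\gamma^{\ast}$) guarantees $\beta(3/2)\ge(\s-mf)/4$ throughout $(0,\Gamma_0]$, so the exponential tails are controlled by one and the same $T$, and the whole chain of inequalities goes through uniformly in $\gamma$.
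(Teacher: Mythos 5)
Your proof is correct, but its second half follows a genuinely different route from the paper's. For $\gamma\to 1$ the two arguments essentially coincide: fix a finite $T$ at which the limit profile $Z(\cdot,T)$ is within $\ep/2$ of $1$, use Proposition \ref{gammasurface} to transfer this to $u_\gamma(\cdot,T)$ for $\gamma$ close to $1$, and propagate forward in time (you compare with the constant stationary solution $1-\ep$; the paper instead uses monotonicity of $t\mapsto u_\Gamma(0,t)$ together with monotonicity in $x$ and $\gamma$ --- both work). For $\gamma\to 0$, however, the paper does \emph{not} invoke Proposition \ref{gammasurface} at all: it constructs an explicit global-in-time barrier, namely the quasispecies flow $X_\ep$ with inflated mutation rate $m_\ep=m(1+\ep\s/2f)$, checks by a direct computation that it is a supersolution of \eqref{Kgamma} for every $\gamma\le \Gamma=1/(1+2f/\ep\s)$, and concludes by comparison. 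You instead splice the finite-horizon convergence $u_\gamma\to X$ with the tail control coming from \eqref{u_xestpazzi}, so the burden shifts to showing that the decay rate $\beta$ can be taken uniformly positive for $\gamma$ near $0$; your observation that $\beta(\alpha)\to(\s-mf)(\alpha-1)$ as $\gamma\to 0$, with $\alpha=3/2$ fixed, settles this correctly, and the identity $\partial_t u_\gamma(0,t)=\frac{mf}{\gamma}\left[u_\gamma(\gamma,t)-u_\gamma(0,t)\right]\ge 0$ combined with the integral tail estimate is a clean way to pass from time $T_1$ to all later times. The paper's barrier is shorter and produces an explicit $\Gamma_0$; your route is longer but reuses machinery already in place and makes visible where the estimate degenerates near $x=1$. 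One point in your favour: the final ``adjustment of $\ep$'' in the lower bound is not a cosmetic concession but a necessary one. Since $u_\gamma\to X$ uniformly on $[0,1]\times[0,T]$ and $X(0,T)<\bar x$ for every finite $T$, the exact inequality $\bar x\le u_\gamma(0,T)$ claimed in the statement cannot hold for all small $\gamma$ with a single $T$; the correct assertion is $\bar x-\ep\le u_\gamma\le\bar x+\ep$, which is what you prove.
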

\begin{proof}
We first deal with $\gamma$ near 1. The function $(x,t,\gamma)\mapsto u_{\gamma}(x,t)$ is monotone increasing both w.r.t.~$x$ and $\gamma$. Therefore $u_{\gamma}(x,t) \ge u_{\Gamma}(0,t)$ for all $x\in[0,1]$ and $\gamma\in[\Gamma,1]$. Besides also $t\mapsto u_{\Gamma}(0,t)$ is monotone increasing w.r.t.~$t$ because by \eqref{uformula}
\[ u_{\Gamma}(0,t) = \dfrac{mf}{\Gamma} \int_0^t \left( u_{\Gamma}(\Gamma,s)- u_{\Gamma}(0,s)\right) ds \]
with $ u_{\Gamma}(\Gamma,s)\ge  u_{\Gamma}(0,s)$ for any $s$. 
Hence $u_{\gamma}(x,t) \ge u_{\Gamma}(0,T)$ for all $x\in[0,1]$, $t\ge T$ and $\gamma\in[\Gamma,1]$ and the statement is proved by exhibiting $T$ and $\Gamma$  so that $u_{\Gamma}(0,T) \ge 1-\ep$.
But, since $u_1\to 1$ as $t\to +\infty$, there exists $T$ such that $u_1(0,T)\ge 1-\ep/2$. Next  Proposition \ref{gammasurface} ensures that there is $\Gamma$ so that $u_{\Gamma}(0,T)\ge u_1(0,T)- \ep/2 \ge 1-\ep$ as desired.

Concerning the behavior for small $\gamma$, we may assume without loss of generality that $\bar x<1-\ep $.
We next perturb the mutation coefficient by means of $m_{\ep}=m(1+\ep\s/2f)$, and denote by $X_{\ep}$ the solution of the corresponding quasispecies equation \eqref{Kgamma0}.
It is easily seen that \[X_{\ep}(x,t) \le mf/\s+\ep\] for all $x\in[0,L]$ and $t\ge T$, provided that we chose $T$ sufficiently large. \remove{in such a way that
\[
\dfrac{L-\frac{mf}{\s}-\frac{\ep}{2}}{1+\frac{\s(1-L)}{\s-m f-\ep\s}\left(e^{(\s-mf-\ep\s)T}-1\right)} \le \dfrac{\ep}{2}.
\]}

Thus the thesis follows by comparison, if we  exhibit $\Gamma$ such that $X_{\ep}$ is a supersolution to \eqref{Kgamma} for any $\gamma\in(0,\Gamma]$.
But
\begin{align*}
\partial_t X_{\ep} +\s x(1-x)\partial_x X_{\ep} -\frac{m f}{\gamma}\left[ X_{\ep}(x+\gamma(1-x),t)-X_{\ep}\right] \\
=\dfrac{\ep m\s\left(1+(1 -(1+\frac{2f}{\ep\s})\gamma)z\right)}{2\left(1+z\right)^2\left(1+(1-\gamma)z\right)}
\end{align*}
where we have used the notation $z=\dfrac{\s(1-x)}{\s-m_{\ep}f}\left(e^{(\s-m_{\ep}f)t}-1\right) \ge 0$.
Taking $\Gamma=1/(1+\dfrac{2f}{\ep\s})$ ends the proof.
\end{proof}

Eventually, if the point process driving mutation has small intensity (i.e.~if $\gamma$ is near 1), the relative population density does not tend to the quasispecies equilibrium as $t\to +\infty$: the asymptotically stable strategy according to the quasispecies equation is not asymptotically stable in expectation, according to rare mutation.

\def\cprime{$'$}

\end{document}